\newtheorem{theorem}{Theorem}[section]
\newtheorem{corollary}[theorem]{Corollary}
\newtheorem{conjecture}[theorem]{Conjecture}
\newtheorem{lemma}[theorem]{Lemma}
\theoremstyle{definition}    
\newtheorem{definition}[theorem]{Definition}
\theoremstyle{remark}
\newtheorem{remark}[theorem]{Remark}
\newtheorem{example}[theorem]{Example}
\newcommand{\pair}[2]{\langle #1, #2 \rangle}
\newcommand{\ignore}[1]{}
\newcommand{\ol}[1]{\overline{#1}}
\newcommand{\ti}[1]{\widetilde{#1}}
\newcommand{\scr}[1]{\mathscr{#1}}
\newcommand{\mf}[1]{\mathfrak{#1}}
\newcommand{\tn}[1]{\textnormal{#1}}
\renewcommand{\i}{{\mathrm{i}}}
\def\APS{\ensuremath{\textnormal{APS}}}
\def\ind{\ensuremath{\textnormal{index}}}
\def\Ad{\ensuremath{\textnormal{Ad}}}
\def\kg{\ensuremath{\mathfrak{g}}}
\def\A{\ensuremath{\mathcal{A}}}
\def\D{\ensuremath{\mathcal{D}}}
\def\F{\ensuremath{\mathcal{F}}}
\def\L{\ensuremath{\mathcal{L}}}
\def\bC{\ensuremath{\mathbb{C}}}
\def\bR{\ensuremath{\mathbb{R}}}
\def\bZ{\ensuremath{\mathbb{Z}}}
\def\End{\ensuremath{\textnormal{End}}}
\def\Hom{\ensuremath{\textnormal{Hom}}}
\def\ker{\ensuremath{\textnormal{ker}}}
\def\id{\ensuremath{\textnormal{id}}}
\def\even{\ensuremath{\textnormal{even}}}
\def\odd{\ensuremath{\textnormal{odd}}}
\def\dim{\ensuremath{\textnormal{dim}}}
\def\dom{\ensuremath{\textnormal{dom}}}
\def\pt{\ensuremath{\textnormal{pt}}}
\def\Ch{\ensuremath{\textnormal{Ch}}}
\def\Td{\ensuremath{\textnormal{Td}}}
\def\Pin{\ensuremath{\tn{Pin}}}
\def\Spin{\ensuremath{\tn{Spin}}}
\begin{document}

\title{Geometric quantization of \MakeLowercase{$b$}-symplectic manifolds}

\author{Maxim Braverman}
\address{Department of Mathematics,
Northeastern University,
Boston, MA 02115,
USA}

\email{m.braverman@northeastern.edu}

\author{Yiannis Loizides}
\address{Department of Mathematics,
Pennsylvania State University,
University Park, PA 16802,
USA}

\email{yxl649@psu.edu}

\author{Yanli Song}
\address{Department of Mathematics,
Washington University,
St. Louis, MO 63130,
USA}

\email{yanlisong@wustl.edu}

\subjclass[2010]{53D20,53D50,19K56,	58J32 }
\keywords{Geometric quantization,  Atiyah-Patodi-Singer, $b$-symplectic}

\title{Geometric quantization of $b$-symplectic manifolds}

\sloppy

\begin{abstract}
We introduce a method of geometric quantization for compact $b$-symplectic manifolds in terms of the index of an Atiyah-Patodi-Singer (APS) boundary value problem.  We show further that b-symplectic manifolds have canonical Spin-c structures in the usual sense, and that the APS index above coincides with the index of the Spin-c Dirac operator. We show that if the manifold is endowed with a Hamiltonian action of a compact connected Lie group with non-zero modular weights, then this method satisfies the Guillemin-Sternberg ``quantization commutes with reduction'' property. In particular our quantization coincides with the formal quantization defined by Guillemin, Miranda and Weitsman, providing a positive answer to a question posed in their paper.
\end{abstract}

\maketitle


\setcounter{tocdepth}{1}

\section{Introduction}
A {\em $b$-symplectic} (or \emph{log symplectic}) manifold $(M,\omega)$ is a manifold $M$ equipped with a `symplectic form' $\omega$ allowed to have a pole along a hypersurface $Z\subset M$.  The study of such manifolds was initiated by Nest and Tsygan \cite{NestTsygan96}, who presented a deformation quantization scheme for such manifolds. A comprehensive study of $b$-symplectic manifolds was conducted in \cite{guillemin2014symplectic,guillemin2014convexity, gualtieri2014symplectic,guillemin2014toric}. 

In \cite{guillemin2018geometric} Guillemin, Miranda and Weitsman  defined the {\em formal geometric  quantization} of a $b$-symplectic  manifold endowed with a Hamiltonian action of a torus $T$. Specifically, they associated to such a manifold a virtual representation of $T$, which satisfies the ``quantization commutes with reduction" property, see \cite{Weitsman01-QNC} for the notion of formal quantization. One of the most surprising results of \cite{guillemin2018geometric} is that the formal quantization space is finite dimensional.  The authors of \cite{guillemin2018geometric} posed a problem of finding a $T$-equivariant  Fredholm operator, whose index space is isomorphic to their formal quantization. The purpose of this paper is to introduce approaches to geometric quantization of $b$-symplectic manifolds using Fredholm operators. 

Let $(M,\omega)$ be a $b$-symplectic manifold  (no group action is required) and let $Z\subset M$ be the exceptional hypersurface along which $\omega$ is singular.  Let $L \to M$ be a prequantum line bundle over $M$, cf. Definition~\ref{D:prequantum}. We construct a compact manifold with boundary $M_\epsilon$ by cutting near the hypersurface $Z$, whose boundary is diffeomorphic to two copies of $Z$. Let $D_{\APS}$ denote the Dolbeault-Dirac operator $D$ acting on sections of $L|_{M_\epsilon}$ and endowed with the Atiyah-Patodi-Singer boundary conditions. Then $D_{\APS}$ is Fredholm and the Fredholm index $\ind(D_{\APS})$ is independent of the cutting (as long as the cutting is close enough to $Z$). We define this index to be the {\em} geometric quantization of $(M,Z,\omega)$:
\begin{equation}\label{E:Iquant=ind}
	Q(M,Z,\omega) \ := \ \ind (D_{\APS} )\in \bZ.
\end{equation}

We can remove the singularity by gluing the manifold $M_\epsilon$ into a closed manifold $\widetilde{M}$ (diffeomorphic to $M$) since it has two isomorphic boundaries. Though the almost complex structure cannot be glued (nor the symplectic form), we nevertheless show that there exists a smooth Spin$^c$-structure on the manifold $\widetilde{M}$, which is canonical up to isomorphism. Moreover we prove that $Q(M, Z, \omega)$ equals the Fredholm index of the Spin$^c$-Dirac operator $\widetilde{D}$ on $\widetilde{M}$, that is  
\begin{equation}
\label{eqn:Qind}
Q(M,Z,\omega) \ = \ \ind( \widetilde{D} )\in \bZ.
\end{equation}
To provide a simple analogy with volumes of b-symplectic manifolds: we think of \eqref{E:Iquant=ind} (or rather its limit as $\epsilon \rightarrow 0^+$) as an analogue of the b-symplectic volume defined by a limit:
\begin{equation} 
\label{eqn:lim}
\lim_{\epsilon\rightarrow 0^+} \int_{M_\epsilon} \frac{\omega^n}{n!}.
\end{equation}
On the other hand $[\omega^n/n!]$ is a top-degree class in the b-de Rham cohomology of $M$.  There is a map (of Mazzeo-Melrose) to the ordinary top-degree de Rham cohomology of $M$, and pairing with $[M]$ gives a number that coincides with \eqref{eqn:lim}.  We think of the index of $\ti{D}$ in \eqref{eqn:Qind} as an analogue of this second description of the b-symplectic volume.

Suppose now that a compact Lie group $G$ acts on $(M,Z, \omega)$ in a Hamiltonian fashion. Then the operators $D_\APS$, $\ti{D}$ are $G$-equivariant, the definition \eqref{E:Iquant=ind} has a refined meaning as a virtual representation of $G$, and \eqref{eqn:Qind} still holds, now as an equality in the representation ring $R(G)$. We conjecture that \eqref{E:Iquant=ind} satisfies ``quantization commutes with reduction'' provided that the reduced space is still a $b$-symplectic manifold (orbifold):
\begin{conjecture}
$[Q(M, Z, \omega)]^G = Q(M_{\text{red}}, Z_{\text{red}},  \omega_{\text{red}}) \in \bZ$.
\end{conjecture}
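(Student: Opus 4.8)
A natural route to the conjecture is the following. The whole strategy rests on the identity \eqref{eqn:Qind}, which recasts $Q(M,Z,\omega)$ as the index of a genuine Spin$^c$-Dirac operator $\ti{D}$ on the \emph{closed} manifold $\ti{M}$, and so brings the problem within reach of the ``quantization commutes with reduction'' machinery for Spin$^c$-Dirac operators. The first task is to organize the reduction on the $b$-symplectic side. The exceptional hypersurface $Z$ is intrinsic to $\omega$, hence $G$-invariant; under the hypotheses that make $M_{\text{red}}=\mu^{-1}(0)/G$ a $b$-symplectic orbifold --- notably $0$ a $b$-regular value of the $b$-moment map $\mu\colon M\to\kg^*$ and the $G$-action locally free on $\mu^{-1}(0)$ --- one checks that its exceptional hypersurface is precisely $Z_{\text{red}}=(Z\cap\mu^{-1}(0))/G$. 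One then performs the cutting $G$-equivariantly: $M_\epsilon$ is a compact $G$-manifold with boundary $\partial M_\epsilon\cong Z\sqcup Z$, the operator $D_\APS$ is $G$-equivariant, cutting and reduction commute in the sense that $(M_\epsilon)_{\text{red}}\cong(M_{\text{red}})_\epsilon$, and gluing identifies $\ti{M_{\text{red}}}$ with a Spin$^c$ reduction of $(\ti{M},\ti{D})$ along $\mu^{-1}(0)/G$, using the naturality (up to isomorphism) of the canonical Spin$^c$ structure constructed earlier in the paper.

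With this dictionary in place the conjecture becomes the assertion $[\ind(\ti{D})]^G=\ind(\ti{D}_{\text{red}})$ for the reduced Spin$^c$-Dirac operator $\ti{D}_{\text{red}}$, which we would attack by analytic localization in the spirit of Tian--Zhang and Ma--Zhang: deform $\ti{D}$ to $\ti{D}+t\,c(\mathsf{v})$ for a $G$-invariant vector field $\mathsf{v}$ and let $t\to\infty$. Outside a tubular neighborhood $U$ of $Z$ the form $\omega$ is symplectic, and there $\mathsf{v}$ is the Kirwan vector field attached to $\mu$ and an invariant metric, whose zero set in $\ti{M}\setminus U$ is $\mu^{-1}(0)$ together with components carrying no $G$-invariant index. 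Inside $U$, where $\mu$ diverges along the modular weight and so does not glue to a smooth function on $\ti{M}$, one uses instead the vector field adapted to the Spin$^c$ structure, i.e. the one attached to the canonical moment map of the determinant line bundle of the Spin$^c$ structure on $\ti{M}$. The non-vanishing of the modular weight forces $\mu^{-1}(0)$, and all critical behavior relevant to the invariant index, into $\ti{M}\setminus U$, so that for $t$ large the deformed operator has no invariant kernel over $U$; the localization over $\ti{M}\setminus U$ then computes $\ind(\ti{D}_{\text{red}})$ just as in the closed symplectic case, with orbifold singularities absorbed by Kawasaki's index theorem.

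The principal obstacle is the geometry near $Z$. One has to reconcile the canonical Spin$^c$ structure on $\ti{M}$ --- natural only up to isomorphism, with a determinant line bundle that differs from the prequantum one inside $U$ --- with a moment-map-type function, and then verify that the corrections which, following Paradan--Vergne, obstruct a naive Spin$^c$ version of $[Q,R]=0$ all vanish in this situation. We expect this because the Spin$^c$ structure is ``symplectic-like'' away from an arbitrarily small neighborhood of $Z$, but upgrading this to uniform large-$t$ estimates across the gluing region is where the genuine difficulty lies. A complementary and possibly sturdier approach avoids $\ti{M}$ altogether and proves an \emph{equivariant APS} form of $[Q,R]=0$ directly on $M_\epsilon$: deform $D_\APS$ by the Kirwan vector field of $\mu$, which is honestly defined on $M_\epsilon$, and control the boundary terms through the spectral behavior of the induced operators on $Z$; here the non-vanishing modular weight again keeps $\mu$ bounded away from $0$ near $\partial M_\epsilon$, so the deformation is nondegenerate at the boundary and compatible with the APS conditions. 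In either approach one can finally reduce the general case to the torus case --- which the present paper establishes under the non-vanishing modular weight hypothesis --- by Meinrenken--Sjamaar abelianization.
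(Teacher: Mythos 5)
Your second proposal (working directly on $M_\epsilon$ with APS-type boundary conditions and deforming by the Kirwan vector field) is indeed the route the paper takes in Section~\ref{SS:qr}, but you do not supply the decisive step, and you conflate a few things. The paper establishes the conjecture only under the standing assumption that the modular weight $c$ is nowhere zero, and this assumption does a lot of work that you leave implicit: it makes $\mu$ proper on $M\backslash Z$, confines the Kirwan zero set to a compact subset of $M\backslash Z$, and most importantly forces $Z_{\text{red}}=\emptyset$, so the reduced space is an \emph{ordinary} symplectic orbifold. That last point is essential, because the key input is Ma--Zhang's theorem for proper moment maps \cite{MaZhangTransEll} (Theorem~\ref{thm:MaZhang}), which produces $Q(M_{\text{red}},\omega_{\text{red}})$ and not an a priori $b$-symplectic quantization.

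The genuine gap is the analysis of the boundary conditions under the deformation. The paper deforms the APS boundary operators to $A_{\pm\epsilon,T}$ by adding the Clifford action of the Kirwan vector field, and then proves (Lemma~\ref{lem:invertibility}) that the restriction to $G$-invariant sections, $B_{y,T}^G$, is invertible for all $T\ge 0$ once $|y|$ is small --- by a Tian--Zhang computation \cite{TianZhang} on $B_{y,T}^2$ in which the term $(k+T)^2\log(|y|)|\zeta_Z|^2$ dominates because $\zeta_Z$ is non-vanishing (the non-zero modular weight again). This uniform spectral gap is what makes the index with deformed boundary conditions constant in $T\ge 0$ (Corollary~\ref{cor:independenceofT}), bridging $T=0$ to the large-$T$ regime where Ma--Zhang applies. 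Your sketch does not separate the deformation of the interior operator (a bounded perturbation on the compact $M_\epsilon$, which cannot change the index) from the deformation of the boundary condition (which generally does change the index); the invertibility lemma is exactly what is needed to rule the latter out, and you have not supplied it or an equivalent. Two further missteps: the paper never uses the closed manifold $\ti{M}$ in the $[Q,R]=0$ argument --- your "natural route" faces the non-extendability of $\mu$ that you yourself flag, and $\ti{M}$ is used only for the $\epsilon$-independence Theorem~\ref{thm:indexequal} --- and Meinrenken--Sjamaar abelianization is unnecessary here, since the modular weight $c$ is automatically valued in the center of $\kg^\ast$ (Definition~\ref{def:modularweight}) and the paper handles compact connected $G$ directly.
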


In Section~\ref{SS:qr} we show that the above conjecture is true if the modular weights of the action are non-zero (see Definition~\ref{def:modularweight}).  For the special case $G=T$ a torus, this is the situation considered by Guillemin, Miranda and Weitsman \cite{guillemin2018geometric}, and hence $Q(M,Z,\omega)$ is isomorphic to the formal quantization defined in \emph{loc.cit.}, providing a positive answer to a question posed in their paper. We will discuss more general cases (possibly non-zero modular weights) elsewhere.

Here is a brief outline of the contents of the sections.  In Section \ref{S:bsymplectic} we review basic properties of b-symplectic manifolds, and in particular describe a normal form for the b-symplectic form and complex structure on $^bTM$ near the hypersurface.  In Section \ref{S:Diracop} we discuss Dirac operators, spin-c structures, and boundary conditions on b-symplectic manifolds, establishing basic properties of $D_\APS$, $\ti{D}$ mentioned above.  In particular we show $\ind(D_\APS)=\ind(\ti{D})$ and provide some equivalent Atiyah-Singer formulas.  In Section \ref{S:qr} we discuss the equivariant situation involving a compact Lie group action with non-zero modular weights, and we show that in this case the $[Q,R]=0$ theorem for \eqref{E:Iquant=ind} follows from work of Ma and Zhang \cite{MaZhangTransEll}.  In the Appendix, we give a conceptual construction of the spin-c structure mentioned above, that illuminates some of its properties; for example, we see that the construction extends to the case with normal crossing singularities.\\

\noindent \textbf{Acknowledgements.} We thank Songhao Li, Reyer Sjamaar and Yi Lin for helpful discussions.

\section{$b$-symplectic manifolds}\label{S:bsymplectic}

In this section we recall some basic properties of $b$-symplectic manifolds. See for example \cite{guillemin2014symplectic} for details.

\subsection{$b$-symplectic forms, defining functions.}\label{SS:bsymplectic}
Let $M^n$ be a closed oriented manifold and let $\iota \colon Z\hookrightarrow M$ be a smooth compact hypersurface.  The $b$ tangent bundle $^bTM$ is the vector bundle over $M$ whose smooths sections consists of vector fields on $M$ that are tangent to $Z$.  This description determines a bracket on the sections as well as a smooth \emph{anchor map}
\[ a \colon {}^bTM \rightarrow TM \] 
given by evaluation, making ${}^bTM$ into a Lie algebroid.  The anchor $a$ is a bundle isomorphism over $M \backslash Z$, whereas $a(^bTM|_Z)=TZ$.   

The $b$ cotangent bundle $^bT^\ast M=(^bTM)^\ast$.  Smooth sections of $^bT^\ast M$ can be thought of as 1-forms allowed to have a logarithmic singularity on $Z$: if $f$ is a smooth function vanishing to order $1$ on $Z=f^{-1}(0)$, then $df/f$ may be thought of as a smooth section of $^bT^\ast M$; for example a section $v \in C^\infty(M,{}^bTM)$ is a vector field on $M$ tangent to $Z$, hence $df(v)$ vanishes to at least order $1$ on $Z$, and $f^{-1}df(v)|_{M\backslash Z}$ extends to a smooth function on $M$.  There is a `$b$ de Rham complex' $(\Omega^\bullet_b(M)=C^\infty(M,\wedge^{\bullet} {}^bT^\ast M),d)$ (in fact a special case of the general definition of the de Rham complex of a Lie algebroid).  A result of Mazzeo-Melrose (see \cite{melrose1993atiyah}, or \cite[Theorem 27]{guillemin2014symplectic}) describes the corresponding $b$-de Rham cohomology groups in terms of ordinary de Rham cohomology groups:
\begin{equation} 
\label{eqn:MazzeoMelrose}
H^p_{b,dR}(M)\simeq H^p_{dR}(M)\oplus H^{p-1}_{dR}(Z).
\end{equation}
Many other notions of ordinary differential geometry have `$b$' analogues, cf. \cite{melrose1993atiyah}.  

\begin{definition}\label{D:bsymplecticform}
A $b$ symplectic form is a closed $b$ 2-form $\omega \in \Omega^2_b(M)$ such that the map of vector bundles $^bTM \rightarrow {}^bT^\ast M$ (induced by contraction) is an isomorphism.
\end{definition}
A $b$ symplectic form is equivalently the `inverse' of an ordinary smooth Poisson structure $\omega^{-1} \in \Gamma(\wedge^2 TM)$ whose top exterior power vanishes transversally along $Z$. 

\begin{remark}
\label{rem:compatiblecomplex}
In particular Definition \ref{D:bsymplecticform} says that $(^bTM,\omega)$ is a symplectic vector bundle.  It therefore admits compatible complex structures $J \in \End(^bTM)$ such that $J^2=-1$, $\omega(Jw,Jv)=\omega(w,v)$ and $\omega(w,Jv)=:g(w,v)$ defines a positive definite inner product on $^bTM$.  (And moreover any two such $J$'s are homotopic.)
\end{remark}

The one-dimensional kernel $\ker(a|_Z)\subset {}^bTM|_Z$ is a line subbundle with a canonical trivialization $e \in C^\infty(Z,\ker(a|_Z))$ locally taking the form
\begin{equation}
\label{eqn:canontriv}
e=yY|_Z
\end{equation}
(restriction in the sense of locally defined sections of $^bTM$) in terms of any (locally defined) function $y$ vanishing to order $1$ on $Z$ and (locally defined) vector field $Y$ satisfying $Yy=1$ (cf. \cite[Section 3]{guillemin2014symplectic}).  The cotangent bundle $T^\ast Z$ to $Z$ is canonically identified with the annihilator of $\ker(a|_Z)=\bR\cdot e$ in $^bT^\ast M|_Z$.  If an ordinary smooth differential form is viewed as a section of $\wedge {}^bT^\ast M$, then its restriction to $Z$ (in the sense of sections of vector bundles) coincides with its pullback to $Z$ (in the sense of differential forms); this is just because sections of $^bTM$ are by definition vectors fields on $M$ which are tangent to $Z$. 

\ignore{
  
Note that for any smooth 1-form $\alpha \in \Omega^1(M)\subset \Omega^1_b(M)$, the restriction $\alpha|_Z \in \Gamma({}^bT^\ast M|_Z)$ as a section of ${}^bT^\ast M$ annihilates $e \in \ker(a|_Z)$, since the vector field $a(e)=x\partial_x$ vanishes on $Z$.  Hence $\alpha|_Z \in \Gamma({}^bT^\ast M|_Z)$ takes values in the subbundle $T^\ast Z$, and so coincides with the pullback $\iota^\ast \tau$ (in the sense of smooth differential forms). This is a slightly surprising point so we reiterate: if a smooth differential form is viewed as a section of $\wedge {}^bT^\ast M$, then its restriction to $Z$ (in the sense of sections of vector bundles) coincides with its pullback to $Z$ (in the sense of differential forms). See for example . }

\begin{definition}\label{D:definingfunction}
A {\em global defining function} for $Z$ is a smooth real-valued function $x\colon M\to \bR$ which vanishes to order $1$ on $Z=x^{-1}(0)$.
\end{definition}
A $b$ symplectic manifold admits a global defining function: indeed one may take $x=\omega^{-n}/\tau$ where $\tau$ is any volume form (recall we assume $M$ oriented).  Hence the orientation on $M$ determines an equivalence class of global defining functions, up to multiplication by smooth positive functions.  This also implies that for an (oriented) $b$ symplectic manifold $(M,Z,\omega)$ the hypersurface $Z$ is necessarily separating, i.e. $M \backslash Z=M_{x>0}\sqcup M_{x<0}$ is disconnected, and $Z$ and its normal bundle are oriented.

\subsection{Normal forms for $\omega$, $J$ near $Z$.}\label{SS:Product}
Let $(M,Z,\omega)$ be a $b$-symplectic manifold. Let
\[ \iota \colon Z \hookrightarrow M \]
denote the inclusion map. The orientation determines an equivalence class of global defining functions for $Z$.  Choose a global defining function $x$ in this class, which we may further normalize such that $|x|=1$ outside a small collar neighborhood
\[ C\simeq Z \times (-1,1), \]
where $x|_C$ is the projection $Z\times (-1,1)\rightarrow (-1,1)$.  Hence $dx|_{M\backslash C}=0$.  Let $\pi \colon C \rightarrow Z$ be the projection.  The function $x$ determines a closed $b$ 1-form $dx/x \in \Omega^1_b(M)$ satisfying $\pair{dx/x}{e}=1$ inducing splittings:
\begin{equation} 
\label{eqn:directsum}
^bT^\ast M|_Z=\bR \frac{dx}{x}\oplus T^\ast Z, \qquad {}^bTM|_Z=\bR e\oplus TZ.
\end{equation}   

Let $\mu$ be any $b$-differential $p$-form.  Then $\mu$ can be written as
\[ \mu =\frac{dx}{x}\wedge \alpha +\beta,\]
for some smooth forms $\alpha \in \Omega^{p-1}(C)$, $\beta \in \Omega^p(M)$ (we allow that $\alpha$ might only be defined on $C$ since $dx=0$ outside of $C$).

The restriction of $\mu$ to $Z$ (as a section of $\wedge {}^bT^\ast M$) is given by (see Section \ref{SS:bsymplectic}):
\begin{equation} 
\label{eqn:decomposemu}
\mu|_Z=\frac{dx}{x}\wedge \iota^\ast \alpha+\iota^\ast \beta.
\end{equation}
It is useful to note in passing that the differential form 
\[ \alpha_Z:=\iota^\ast \alpha \in \Omega^{p-1}(Z) \] 
does not depend on any choices, since it may be expressed as the contraction
\begin{equation} 
\label{eqn:alphacontraction}
\alpha_Z=\iota(e)(\mu|_Z) 
\end{equation}
and the section $e$ of \eqref{eqn:canontriv} does not depend on any choices.  On the other hand $\iota^\ast \beta=\mu|_Z-(dx/x)\wedge \alpha_Z$ depends on the choice of defining function $x$.

\begin{lemma}[cf. Proofs of \cite{guillemin2014symplectic}, Proposition 10, Theorem 27]
\label{lem:decomp}
In the decomposition \eqref{eqn:decomposemu}, one can choose $\alpha=\pi^\ast \alpha_Z$, where $\alpha_Z$ is the intrinsically defined $(p-1)$-form $\alpha_Z$ in \eqref{eqn:alphacontraction}.  Hence there is a decomposition
\[ \mu=\frac{dx}{x}\wedge \pi^\ast \alpha_Z+\beta, \qquad \alpha_Z \in \Omega^{p-1}(Z), \beta \in \Omega^p(M).\]
If $\mu$ is closed then $\alpha_Z$, $\beta$ are closed, and
\[ [\mu] \in H^p_{b,dR}(M)\mapsto ([\alpha_Z],[\beta])\in H^{p-1}_{dR}(Z)\oplus H^p_{dR}(M) \]
realizes the isomorphism \eqref{eqn:MazzeoMelrose}.
\end{lemma}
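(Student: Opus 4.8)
The plan is to construct $\beta$ directly from $\mu$ and the chosen defining function $x$, then verify the required properties. Given any $b$-form $\mu$, use the splitting \eqref{eqn:directsum} (valid on the collar $C$, where $dx/x$ is defined) to write $\mu|_C = \frac{dx}{x}\wedge\alpha' + \beta'$ for some $\alpha'\in\Omega^{p-1}(C)$, $\beta'\in\Omega^p(C)$. The issue is that $\alpha'$, $\beta'$ are only pinned down after we decide how to split; the intrinsic object is $\alpha_Z=\iota(e)(\mu|_Z)\in\Omega^{p-1}(Z)$ from \eqref{eqn:alphacontraction}. First I would set $\gamma := \mu - \frac{dx}{x}\wedge\pi^\ast\alpha_Z$ on $C$ (this makes sense since $\pi^\ast\alpha_Z$ is a genuine smooth form on $C$ and $dx/x\in\Omega^1_b(M)$). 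The key point is that $\iota(e)\gamma|_Z = \alpha_Z - \pair{dx/x}{e}\,\iota^\ast\pi^\ast\alpha_Z = \alpha_Z - \alpha_Z = 0$, using $\pair{dx/x}{e}=1$ and $\iota^\ast\pi^\ast = \id$ on $\Omega^\bullet(Z)$; and $\iota(e)$ annihilates any smooth form pulled back from $M$ because $a(e)=x\partial_x$ vanishes on $Z$. Hence $\gamma$ is a smooth $b$-form on $C$ whose contraction with $e$ vanishes on $Z$, which by \eqref{eqn:directsum} means $\gamma|_Z$ has no $\frac{dx}{x}$-component, i.e. $\gamma$ extends across $Z$ as an ordinary smooth $p$-form on $C$. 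Since $\mu$ and $\pi^\ast\alpha_Z$ agree with $\mu$ and $\beta$-type data off the collar (where $dx=0$, so $\frac{dx}{x}\wedge\pi^\ast\alpha_Z$ is supported in $C$), patching $\gamma$ on $C$ with $\mu$ on $M\setminus Z$ yields a global smooth $\beta\in\Omega^p(M)$ with $\mu = \frac{dx}{x}\wedge\pi^\ast\alpha_Z + \beta$; this is essentially the argument in the cited proofs of \cite{guillemin2014symplectic}.

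Next, suppose $\mu$ is closed. Since $d(dx/x)=0$, we get $0=d\mu = -\frac{dx}{x}\wedge d\pi^\ast\alpha_Z + d\beta = -\frac{dx}{x}\wedge\pi^\ast(d\alpha_Z) + d\beta$ on $C$. Applying $\iota(e)$ and restricting to $Z$, and using again that $\iota(e)$ kills pullbacks of smooth forms and that $\pair{dx/x}{e}=1$, gives $\iota^\ast d\alpha_Z = 0$ wherever the collar structure is in force; but $d\alpha_Z\in\Omega^p(Z)$ and $\iota^\ast\pi^\ast=\id$, so $d\alpha_Z=0$. Feeding this back, $d\beta=0$ on $C$, and off $C$ we have $d\beta=d\mu=0$; hence $\beta$ is closed. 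Finally, to see that $[\mu]\mapsto([\alpha_Z],[\beta])$ realizes the Mazzeo--Melrose isomorphism \eqref{eqn:MazzeoMelrose}, I would note that the assignment $\mu\mapsto(\alpha_Z,\beta)$ is a chain map from the $b$-de Rham complex to the mapping-cone-type complex $\Omega^{\bullet-1}(Z)\oplus\Omega^\bullet(M)$ (with differential $(\eta,\xi)\mapsto(d\eta, d\xi)$, the two summands being uncoupled at the level of forms once the normal form is fixed), and that it is the standard splitting underlying the proof of \eqref{eqn:MazzeoMelrose} in \cite{melrose1993atiyah}; concretely, $[\alpha_Z]$ is the residue class and $[\beta]$ its complement, so the induced map on cohomology is exactly \eqref{eqn:MazzeoMelrose}. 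One should check well-definedness on cohomology: if $\mu=d\nu$ then writing $\nu=\frac{dx}{x}\wedge\pi^\ast\alpha_Z^\nu+\beta^\nu$ in the same normal form gives $\alpha_Z = -d\alpha_Z^\nu$ and $\beta = d\beta^\nu$ (up to the sign bookkeeping from $d(\frac{dx}{x}\wedge-)$), so $([\alpha_Z],[\beta])=0$; surjectivity is clear since any pair $(\eta,\xi)$ is hit by $\frac{dx}{x}\wedge\pi^\ast\eta+\xi$, and injectivity then follows by counting, or directly from the fact that a $b$-exact-looking pair comes from an honest primitive in the normal form.

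The main obstacle is the extension/patching step: showing that $\gamma=\mu-\frac{dx}{x}\wedge\pi^\ast\alpha_Z$, a priori only a $b$-form on the collar, is genuinely a smooth ordinary form there (no residual logarithmic singularity), and that the two local descriptions glue to a global $\beta\in\Omega^p(M)$. The vanishing of $\iota(e)\gamma|_Z$ handles the ``no $dx/x$-component'' part, but one still has to argue smoothness of the coefficient functions up to and across $Z$ — this is where the precise normal form for the collar ($x|_C$ the coordinate projection, $dx$ supported in $C$) is used, and it is exactly the content of the cited propositions in \cite{guillemin2014symplectic}, so I would either invoke those directly or reproduce the short local coordinate computation. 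The closedness and cohomology statements are then formal consequences of $d(dx/x)=0$ and the contraction identity \eqref{eqn:alphacontraction}.
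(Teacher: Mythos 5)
Correct — the paper supplies no proof of its own (only the citation to Guillemin--Miranda--Pires in the lemma heading), and your argument is a faithful reconstruction of the standard one. To close the gap you flag: on the collar $C$ write $\gamma=\mu-\frac{dx}{x}\wedge\pi^\ast\alpha_Z$ uniquely as $\gamma=\frac{dx}{x}\wedge a+b$ with $a$, $b$ pulled back from $Z$; then $\iota(e)\gamma=a$ holds \emph{identically} on $C$, not just along $Z$ (since $\pair{dx/x}{e}=1$ and $\iota(e)b=0$ on all of $C$), so $\iota(e)\gamma|_Z=0$ forces $a=x\tilde a$ for a smooth $\tilde a$ and hence $\frac{dx}{x}\wedge a=dx\wedge\tilde a$ is an ordinary smooth form — that is the entire content of the local computation you left as the ``main obstacle.''
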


For the $b$-symplectic form
\begin{equation} 
\label{eqn:firstnormalform}
\omega=\frac{dx}{x}\wedge \pi^\ast \alpha_Z+\beta
\end{equation}
we can say slightly more:
\begin{lemma}
\label{lem:prodformomega}
One can choose $C$, $x$, $\pi$ such that, in the expression \eqref{eqn:firstnormalform} for the $b$-symplectic form, the closed $2$-form $\beta$ has the following property: on a (possibly smaller) collar neighborhood $C'$ of $Z$, $\beta|_{C'}=(\pi^\ast \iota^\ast \beta)|_{C'}$.
\end{lemma}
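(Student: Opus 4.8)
The plan is to produce the desired product structure by a Moser-type deformation argument applied within the collar, working relative to the hypersurface $Z$. Starting from the normal form \eqref{eqn:firstnormalform} supplied by Lemma~\ref{lem:decomp}, I first record that both $\omega$ and its "pulled-back model" $\omega_0 := \frac{dx}{x}\wedge \pi^\ast \alpha_Z + \pi^\ast \iota^\ast \beta$ are closed $b$-forms on the collar $C\simeq Z\times(-1,1)$ which agree to first order along $Z=\{x=0\}$: indeed $\iota^\ast \omega = \iota^\ast \omega_0$ because $\iota^\ast \pi^\ast = \id$ on $\Omega^\bullet(Z)$, and by \eqref{eqn:alphacontraction} their $\alpha_Z$-parts already coincide identically, so $\omega-\omega_0 = \beta - \pi^\ast\iota^\ast\beta$ is an ordinary smooth $2$-form on $C$ vanishing when pulled back to $Z$.

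Next I would check the nondegeneracy that makes Moser applicable: by Remark~\ref{rem:compatiblecomplex} and Definition~\ref{D:bsymplecticform}, $\omega$ is a $b$-symplectic form on $C$, and since $\omega_0$ agrees with $\omega$ along $Z$ as a section of $\wedge^2\,{}^bT^\ast M|_Z$, and $b$-nondegeneracy is a pointwise open condition on $^bT^\ast M$, after shrinking the collar to some $C'$ the entire family $\omega_t := \omega_0 + t(\omega-\omega_0)$, $t\in[0,1]$, consists of $b$-symplectic forms on $C'$. All the $\omega_t$ lie in the same $b$-cohomology class relative to $Z$; more usefully, $\omega-\omega_0$ is an exact ordinary smooth $2$-form, $\omega-\omega_0 = d\sigma$, with $\sigma$ an ordinary smooth $1$-form on $C'$ that I can arrange to vanish along $Z$ (this uses the relative Poincaré lemma on the collar together with the fact that $\iota^\ast(\omega-\omega_0)=0$; shrinking further if needed, I can take $\sigma$ to vanish to first order at $Z$). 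Then I define the time-dependent vector field $v_t$ by $\iota(v_t)\omega_t = -\sigma$ using $b$-nondegeneracy of $\omega_t$, obtaining a section of $^bTM|_{C'}$; because $\sigma$ vanishes at $Z$ while $\omega_t$ is $b$-nondegenerate there, $v_t$ is a genuine vector field on $C'$ tangent to $Z$ that vanishes along $Z$. Integrating $v_t$ gives a flow $\phi_t$ fixing $Z$ pointwise, defined on a possibly smaller neighborhood, with $\phi_1^\ast\omega = \omega_0$. Pulling back $x$, $\pi$, $C$ by $\phi_1$ (and noting $\phi_1$ fixes $Z$, so the new $x$ is still a defining function and the new collar is still a collar) yields the asserted expression $\beta|_{C'} = (\pi^\ast\iota^\ast\beta)|_{C'}$ in \eqref{eqn:firstnormalform}.

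The main obstacle is the bookkeeping at $Z$: the standard Moser argument lives on an ordinary symplectic manifold, but here nondegeneracy is of $b$-type, so I must be careful that the generating vector field $v_t$, a priori only a section of $^bTM$, is actually an honest vector field (tangent to $Z$) whose flow exists on a full neighborhood of $Z$ and fixes $Z$. This is why it is essential to choose the primitive $\sigma$ vanishing along $Z$ — the factor from $dx/x$ in $\omega_t$ is absorbed precisely because $\sigma$ has a matching zero. One should also verify that the flow preserves the "product along $Z$" feature already present in the $\frac{dx}{x}\wedge\pi^\ast\alpha_Z$ term, equivalently that $v_t$ contracted with $dx$ vanishes to the order needed; this follows because $\iota(v_t)\omega_t = -\sigma$ and $\sigma$ has no $\frac{dx}{x}$-component by construction (it is an ordinary smooth form on the collar). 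A clean alternative, which I would mention as a remark, is to bypass $b$-forms entirely: pass to the Poisson picture, where $\omega^{-1}$ and $\omega_0^{-1}$ are genuine smooth bivector fields agreeing along $Z$ to the relevant order, and invoke a relative Moser theorem for Poisson structures near a symplectic leaf, cf. the proofs cited from \cite{guillemin2014symplectic}.
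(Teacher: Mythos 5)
Your proposal is correct and follows essentially the same strategy as the paper: both compare $\omega$ with the product-type model $\omega_0=\tfrac{dx}{x}\wedge\pi^\ast\alpha_Z+\pi^\ast\iota^\ast\beta$, use that $\omega|_Z=\omega_0|_Z$ in $\wedge^2({}^bT^\ast M|_Z)$, and run a Moser deformation relative to $Z$; the only difference is that the paper invokes the $b$-Moser theorem \cite[Theorem 34]{guillemin2018geometric} as a black box, while you unpack its proof (relative Poincar\'e lemma, primitive $\sigma$ with $\iota^\ast\sigma=0$, $b$-nondegeneracy forcing $v_t|_Z=0$). One small point you should make explicit: the flow $\phi_1$ is a priori only defined on a neighborhood of $Z$, while the statement requires a \emph{global} defining function $x$ and collar; as in the paper's proof, multiply $v_t$ by a bump function supported near $Z$ so that $\phi_1$ extends by the identity to a global diffeomorphism of $(M,Z)$, and then pull back $x$, $\pi$, $C$.
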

\begin{proof}
Suppose $\omega$ is decomposed as in \eqref{eqn:firstnormalform}, and let $\beta_Z=\iota^\ast \beta$.  Consider the following $b$ $2$-form, defined on $C$:
\[ \omega_0=\frac{dx}{x}\wedge \pi^\ast \alpha_Z+\pi^\ast \beta_Z. \]
It is closed, as $\alpha_Z$, $\beta_Z$ are both closed.  Using \eqref{eqn:decomposemu}, one has $\omega_0|_Z=\omega|_Z$ as sections of $\wedge^2({}^bT^\ast M|_Z)$, hence $\omega_0$ is nondegenerate along $Z$, and hence on an open collar neighborhood of $Z$.  Replacing $C$ by this smaller collar neighborhood, we may as well assume $\omega_0$ is $b$-symplectic on $C$.

Since $\omega_0|_Z=\omega|_Z \in \Gamma(\wedge^2(^bT^\ast M|_Z))$, we may apply the b-Moser theorem \cite[Theorem 34]{guillemin2018geometric} of Guillemin-Miranda-Pires on $(C,Z)$ and to the $b$-symplectic forms $\omega|_{C},\omega_0$.  This provides tubular neighborhoods $C_0,C_1$ of $Z$ in $C$ and a diffeomorphism $\varphi \colon C_1\rightarrow C_0$ such that $\omega|_{C_1}=(\varphi^\ast \omega_0)|_{C_1}$ and $\varphi|_Z=\id_Z$.  In fact $\varphi$ is the result of integrating a time-dependent vector field $v_t$ that vanishes on $Z$.  If we multiply $v_t$ by a suitable bump function supported in a small neighborhood of $Z$, we obtain a diffeomorphism which still satisfies $\omega|_{C_1'}=(\varphi^\ast \omega_0)|_{C_1'}$ for possibly smaller tubular neighborhoods $C_0',C_1'$ of $Z$, and is the identity outside a small neighborhood of $Z$, so admits an extension by the identity to $M$, which we also denote by $\varphi$. So $\varphi$ now denotes a global diffeomorphism $(M,Z)\rightarrow (M,Z)$ having the property that 
\[ \omega|_{C_1'}=(\varphi^\ast \omega_0)|_{C_1'}.\]

Let $\ti{x}=\varphi^\ast x$, $\ti{\pi}=\pi\circ \varphi|_{\ti{C}}$ where $\ti{C}=\varphi^{-1}(C)$.  Then $\ti{x}$ is a new defining function for $Z$ and $\ti{\pi}\colon \ti{C}\rightarrow Z$ is a new collar neighborhood.  Moreover, on $\ti{C}$
\[ \varphi^\ast \pi^\ast \alpha_Z=\ti{\pi}^\ast \alpha_Z\]
and likewise for $\pi^\ast \beta_Z$.  Hence on $C_1'\subset \ti{C}$,
\[ \omega|_{C_1'}=(\varphi^\ast \omega_0)|_{C_1'}=\frac{d\ti{x}}{\ti{x}}\wedge \ti{\pi}^\ast\alpha_Z+\ti{\pi}^\ast\beta_Z.\]
\end{proof}

In the rest of this subsection, we will assume $C$, $\pi$, $x$ are chosen as in Lemma \ref{lem:prodformomega}, thus
\begin{equation} 
\label{eqn:decomposeomega}
\omega=\frac{dx}{x}\wedge \pi^\ast \alpha_Z+\beta 
\end{equation}
and $\beta$ equals $\pi^\ast \iota^\ast \beta$ on a small collar neighborhood $C'$ of $Z$.

With a decomposition as in \eqref{eqn:decomposeomega} fixed, let $e'$ be the vector field on $Z$ determined by the equations
\[ \iota(e')\alpha_Z=1, \qquad \iota(e')\iota^\ast \beta=0.\]
Via the splitting \eqref{eqn:directsum}, $e'$ is identified with a section of $^bTM|_Z$, $\{e,e'\}$ is linearly independent, hence
\[ E=\tn{span}\{e,e'\} \subset {}^bTM|_Z \]
is a trivial real rank $2$ vector subbundle over $Z$.  Let 
\[ \omega_E=\omega|_E=\frac{dx}{x}\wedge \alpha_Z. \]  
Then $(E,\omega_E)$ is a symplectic vector bundle.  Define a complex structure $J_E$ on $E$ by
\begin{equation}
\label{E:almostcomplex}
J_E e=e', \quad  J_Ee'=-e.
\end{equation}
We obtain a compatible metric $g_E(\cdot,\cdot)=\omega_E(\cdot,J_E\cdot)$ on $E$, such that $\{e,e'\}$ is a global orthonormal frame, hence in terms of the dual frame $dx/x$, $\alpha_Z|_E$,
\[ g_E=\frac{dx^2}{x^2}+\alpha_Z^2 \in \tn{Sym}^2(E^\ast). \]

There is a direct sum decomposition
\begin{equation} 
\label{eqn:directsum2}
^bTM|_Z=E \oplus F, \qquad F=\ker(\alpha_Z). 
\end{equation}
In fact $F=T\F$ is the tangent distribution for the foliation $\F$ of $Z$ into symplectic leaves for the Poisson structure $\omega^{-1}$, and 
\[ \omega_F:=(\iota^\ast\beta)|_F \]
is the family of symplectic forms on the leaves \cite[Proposition 10]{guillemin2018geometric}.  In particular, $(F,\omega_F)$ is a symplectic vector bundle.  Let $J_F$ be a compatible complex structure on $F$, that is $\omega_F(J_Fw,J_Fv)=\omega_F(w,v)$ and
\[ g_F(w,v):=\omega_F(w,J_Fv)\] 
is a positive definite metric on $F$.

By construction, the direct sum decomposition \eqref{eqn:directsum2} identifies $\omega|_Z$ with $\omega_E\oplus \omega_F$.  Thus 
\[ J=J_E\oplus J_F, \qquad g=g_E\oplus g_F \]
defines a compatible complex structure and induced metric on the symplectic vector bundle $(E\oplus F={}^bTM|_Z,\omega|_Z)$, and moreover
\begin{equation} 
\label{eqn:gprodform}
g=\frac{dx^2}{x^2}+\alpha_Z^2+g_F.
\end{equation}
Using the product structure on $C'\subset C\simeq Z \times (-1,1)$, we identify the pull backs $\pi^\ast E$, $\pi^\ast F$ with complementary subbundles of $^bTC'$. Using the fact that $\beta|_{C'}=(\pi^\ast \iota^\ast \beta)|_{C'}$ (Lemma \ref{lem:prodformomega}), $\omega|_{C'}$ is identified with $\pi^\ast \omega_E\oplus \pi^\ast \omega_F$.  Hence $\pi^\ast J$ gives a compatible complex structure for the symplectic vector bundle $(^bTC',\omega|_{C'})$, with induced metric $\pi^\ast g$ taking the form \eqref{eqn:gprodform} on $C'$. 

One can obtain globally defined compatible $g$, $J$ which take the product form described above near $Z$: let $g_C$, $J_C$ now denote the metric and complex structure on $^bTC$ constructed above, let $\rho$ be a bump function equal to $1$ near $Z$ with support in $C$, and let $g_{M\backslash Z}$ be an arbitrary metric on $M\backslash Z$.  If $g_1:=\rho g_C+(1-\rho) g_{M\backslash Z}$, one obtains an invertible anti-symmetric $A \in \End(^bTM)$ such that $g(\cdot,\cdot)=\omega(\cdot,A\cdot)$.  Then $J:=A/|A|$ equals $J_C$ near $Z$ and $g(\cdot,\cdot):=\omega(\cdot,J\cdot)$ has the desired properties. We summarize the results of this section with a lemma:

\begin{lemma}[Normal forms for $\omega$, $J$, $g$ near $Z$]
Let $(M,Z,\omega)$ be a compact oriented $b$-symplectic manifold.  Then there is a global defining function $x$ and collar neighborhood $\pi \colon C\simeq Z \times (-1,1) \rightarrow Z$ with $|x|=1$ outside $C$, such that
\[ \omega=\frac{dx}{x}\wedge \pi^\ast \alpha_Z + \beta,\]
where $\alpha_Z \in \Omega^1(Z)$, $\beta \in \Omega^2(M)$ are closed and $\beta|_{C'}=(\pi^\ast \iota^\ast \beta)|_{C'}$ on a (possibly smaller) collar neighborhood $C'$ of $Z$.  There exists a direct sum decomposition
\[ ^bTM|_{C'}={}^bTC' \simeq \pi^\ast E \oplus \pi^\ast F \]
where $F=\ker(\alpha_Z)$, $E=\tn{span}\{e,e'\}$ are subbundles of $^bTM|_Z$, $e'$ being uniquely determined by $\alpha_Z(e')=1$, $\iota^\ast\beta(e')=0$ and $e$ the canonical section in \eqref{eqn:canontriv}.  There exists a compatible complex structure $J$ on the symplectic vector bundle $(^bTM,\omega)$ such that the metric $g(\cdot,\cdot)=\omega(\cdot,J\cdot)$ takes a product form near $Z$:
\[ g|_{C'}=\frac{dx^2}{x^2}+\pi^\ast \alpha_Z^2+\pi^\ast g_F \]
where $g_F$ is a compatible metric on the symplectic vector subbundle $(F,\iota^\ast \beta|_F)\subset (^bTM|_Z,\omega|_Z)$.
\end{lemma}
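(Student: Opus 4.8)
The plan is to assemble the statement from the pieces already constructed in this subsection; no essentially new argument is needed, only careful bookkeeping of the choices and of the successively shrunk collar neighbourhoods. First I would invoke Lemma~\ref{lem:prodformomega} (which itself rests on Lemma~\ref{lem:decomp} and the $b$-Moser theorem): after replacing $x$ and the collar by their pullbacks under a suitable global diffeomorphism of $(M,Z)$ fixing $Z$ pointwise, one obtains a decomposition $\omega=\frac{dx}{x}\wedge\pi^\ast\alpha_Z+\beta$ with $\alpha_Z=\iota(e)(\omega|_Z)$ intrinsic by \eqref{eqn:alphacontraction}, $\beta\in\Omega^2(M)$ closed, and $\beta|_{C'}=(\pi^\ast\iota^\ast\beta)|_{C'}$ on a possibly smaller collar $C'$. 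The normalization $|x|=1$ and $dx=0$ outside $C$ arranged at the start of the subsection survives this pullback, since the diffeomorphism is the identity away from a small neighbourhood of $Z$.

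Next I would recall the pointwise constructions over $Z$: the vector field $e'$ is uniquely determined by $\iota(e')\alpha_Z=1$, $\iota(e')\iota^\ast\beta=0$ (using nondegeneracy of $\omega|_Z$); with $E=\tn{span}\{e,e'\}$ and $F=\ker(\alpha_Z)$ inside ${}^bTM|_Z$ one has ${}^bTM|_Z=E\oplus F$ and $\omega|_Z=\omega_E\oplus\omega_F$ with $\omega_E=\tfrac{dx}{x}\wedge\alpha_Z$, $\omega_F=(\iota^\ast\beta)|_F$ both nondegenerate. Because $\beta|_{C'}$ is a pullback, the product structure on $C'\simeq Z\times(-1,1)$ identifies ${}^bTC'$ with $\pi^\ast E\oplus\pi^\ast F$ and $\omega|_{C'}$ with $\pi^\ast\omega_E\oplus\pi^\ast\omega_F$. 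Equipping $E$ with $J_E$ as in \eqref{E:almostcomplex} and $F$ with any compatible $J_F$, the sum $J|_Z=J_E\oplus J_F$ is compatible with $\omega|_Z$, its metric is $g_E\oplus g_F$, and since $\{e,e'\}$ is $g_E$-orthonormal with dual coframe $dx/x$, $\alpha_Z$, the pullback $\pi^\ast J$ is a compatible complex structure on ${}^bTC'$ whose metric equals $\frac{dx^2}{x^2}+\pi^\ast\alpha_Z^2+\pi^\ast g_F$.

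Finally I would globalize: choose a bump function $\rho$ equal to $1$ on a neighbourhood of $Z$ inside $C'$ and supported in $C$, an arbitrary metric $g_{M\backslash Z}$ on ${}^bTM$ over $M\backslash Z$, and set $g_1=\rho\,g_C+(1-\rho)g_{M\backslash Z}$, a metric on all of ${}^bTM$. Nondegeneracy of $\omega$ yields a unique invertible $A\in\End({}^bTM)$ with $g_1(\cdot,\cdot)=\omega(\cdot,A\cdot)$; symmetry of $g_1$ forces $A$ to be $\omega$-antisymmetric, so $J:=A|A|^{-1}$ (polar decomposition relative to $g_1$, with $|A|=(A^\ast A)^{1/2}$) satisfies $J^2=-1$, is compatible with $\omega$, and has induced metric $\omega(\cdot,J\cdot)$. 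On the region where $\rho\equiv1$ we have $g_1=g_C$, hence $A=\pi^\ast J$, which is already $g_C$-orthogonal, so $|A|=\id$ and $J=\pi^\ast J$ there; thus $g=\omega(\cdot,J\cdot)$ has the claimed product form near $Z$.

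The only genuinely delicate ingredient is the one packaged into Lemma~\ref{lem:prodformomega}: arranging that $\beta$ becomes a pullback near $Z$, which uses the $b$-Moser theorem together with the fact that the interpolating time-dependent vector field vanishes on $Z$, so the resulting diffeomorphism can be cut off near $Z$ and extended by the identity over $M$. Granting that, what remains is routine: keeping track of the mutual compatibility of the shrinking collars $C\supseteq C'\supseteq\cdots$, and noting that on the region where $g_1$ already coincides with $g_C$ one has $A^\ast A=\id$, so the polar decomposition leaves $\pi^\ast J$ untouched and the product form is preserved exactly.
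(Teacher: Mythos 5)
Your proof is correct and follows the same route the paper takes: the lemma is explicitly presented as a summary of Section~\ref{SS:Product}, and you assemble it from the same ingredients in the same order — Lemma~\ref{lem:prodformomega} (via the $b$-Moser theorem) for the pullback normal form of $\beta$ near $Z$, the pointwise splitting ${}^bTM|_Z=E\oplus F$ with $J_E\oplus J_F$ giving the product metric, and the bump-function/polar-decomposition trick to globalize. Your spelled-out observation that $A=J_C$ where $\rho\equiv 1$, hence $|A|=\mathrm{id}$ and $J=J_C$ there, is exactly the (implicit) reason the paper's global $J$ retains the product form near $Z$, and your correction that the relation should read $g_1(\cdot,\cdot)=\omega(\cdot,A\cdot)$ is consistent with what the paper intends.
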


\subsection{Prequantization}\label{SS:integrability}
A symplectic manifold is prequantizable if the de Rham cohomology class of its symplectic form lies in the image of the map $H^2(M,\bZ)\rightarrow H^2_{dR}(M)$.  A prequantization is then a choice of integral lift. For a $b$-symplectic manifold $(M,Z,\omega)$ a slightly more sophisticated definition of prequantization (or integrality) was suggested in \cite[Section 2 and Section 5]{guillemin2018geometric}.  Recall from Lemma \ref{lem:decomp} that there are canonical de Rham cohomology classes $([\alpha_Z],[\beta])\in H^1_{dR}(Z)\oplus H^2_{dR}(M)$ associated to $(M,Z,\omega)$.

\begin{definition}[\cite{guillemin2018geometric}]
\label{D:integrability}
$(M,Z,\omega)$ is \emph{prequantizable} if $([\alpha_Z], [\beta]) \in H^1_{dR}(Z)\oplus H^2_{dR}(M)$ lies in the image of the map from $H^1(Z,\bZ)\oplus H^2(M,\bZ)$. A \emph{prequantization} is a choice of integral lift.
\end{definition}
\begin{remark}
The integrality of $[\alpha_Z]$ will not play any role for us below.
\end{remark}

\begin{definition}\label{D:prequantum}
\label{D:logconnection}
Given an integral lift $[\hat{\beta}]$ of $[\beta]$, one can find a complex line bundle $L \to M$ (unique up to isomorphism) whose first Chern class is $[\hat{\beta}]$.  We call $L$ a \emph{prequantum line bundle} for $(M,Z,\omega)$.  Choosing a decomposition $\omega=(dx/x)\wedge \pi^\ast \alpha_Z+\beta$ as in Lemma \ref{lem:decomp}, one can find a Hermitian metric and Hermitian connection $\nabla^{L,0}$ such that $\beta=\tfrac{\i}{2\pi}(\nabla^{L,0})^2$.  The corresponding \emph{log prequantum connection} $\nabla^L$ is the connection on $L|_{M\backslash Z}$ given by
\begin{equation}\label{E:logconnection}
\nabla^L\ = \ \nabla^{L,0} - 2\pi \i \log(|x|) \pi^\ast \alpha_Z. 
\end{equation}
In this case, we have that 
\begin{equation}\label{E:logcurvature}
	\omega|_{M\backslash Z} \ = \ \frac{\i}{2\pi}(\nabla^L)^2.
\end{equation}
\end{definition} 
\begin{remark}
\label{rem:notabconn}
As a small caution, the 1-form $\log(|x|)\pi^\ast \alpha_Z$ on $M \backslash Z$ is not a $b$ 1-form, and \eqref{E:logconnection} is not a $b$ connection in the sense of \cite{melrose1993atiyah}.
\end{remark}

\section{Dirac operators on $b$-symplectic manifolds}\label{S:Diracop}
\subsection{The Dolbeault-Dirac operator}\label{SS:Dirac}
Let $L \to M$ be a prequantum line bundle for $(M,Z,\omega)$ as in Definition~ \ref{D:prequantum}. Fix a compatible complex structure $J$ on the symplectic vector bundle $(^bTM,\omega)$, and let $g$ be the metric on $^bTM$ induced by $(\omega,J)$.  We use $g$ to identify $^bTM\simeq {}^bT^\ast M$. As usual let $^bT^{(1,0)}M$, $^bT^{(0,1)}M$ be the $+\i$, $-\i$ eigenbundles of $J$ in $^bTM\otimes \bC$, and similarly for $^bT^{\ast,(1,0)}M$, $^bT^{\ast,(0,1)}M$.  Define
\begin{equation}\label{E:Epm}
S^{+,b} :=  \bigoplus_{k\, \even} 
	\wedge^k\big({}^bT^{\ast,(0,1)}M\big)\otimes L,
	\qquad
	S^{-,b} :=  \bigoplus_{k\, \odd} 
	\wedge^k\big({}^bT^{\ast,(0,1)}M\big)\otimes L,
\end{equation}
then $S^b:=S^{+,b}\oplus S^{-,b}$ is a $\bZ_2$-graded complex Hermitian vector bundle over $M$ (the Hermitian structure is induced from $\omega$, $g$).  There is a standard Clifford action (cf. \cite{LawsonMichelsohn} or the Appendix for background) of $\bC l({}^bTM)=\bC l({}^bTM,g)$ on $S$ given by 
\[
	c(v) \,s := \ 
	\sqrt{2}\,\big(\, v^{0,1}\wedge s - \iota(v^{1,0})s\,\big),
	\qquad v\in {}^bTM, \ s \in S.
\]
Using the symplectic orientation (equivalently, the orientation compatible with the complex structure) on ${}^bTM$, we define the \emph{chirality element} at a point $m \in M$ in terms of an oriented orthonormal frame $v_1,...,v_n$ for ${}^bT_mM$ by
\begin{equation} 
\label{eqn:chirality1}
\Gamma_m=\i^{n/2} v_1\cdots v_n \in \bC l({}^bT_mM).
\end{equation}
It is independent of the choice of oriented frame, hence determines a globally defined section $\Gamma$ of $\bC l({}^bTM)$ satisfying $\Gamma^2=1$.  The $+1$ (resp. $-1$) eigenbundle of $\Gamma$ is $S^{b,+}$ (resp. $S^{b,-}$).  In this sense one says that the $\bZ_2$-grading on $S^b$ is \emph{compatible} with the symplectic orientation on $(^bTM,\omega)$.

Via the anchor, the restriction of $g$ to $M \backslash Z$ becomes a complete Riemannian metric in the usual sense.  Let
\[ L^2(M\backslash Z,S^b)= L^2(M\backslash Z,S^{+,b})\oplus L^2(M\backslash Z,S^{-,b}) \]  
be the $\bZ_2$-graded Hilbert space of square-integrable sections of $S^b|_{M\backslash Z}$ (with respect to the Riemannian volume determined by $g$ and using the Hermitian metric on $S^b$). 

Let $\nabla^{J}$ be the connection on $\wedge^\bullet\big({}^bT^{\ast,(0,1)}(M\backslash Z)\big)$ obtained by projecting the Levi-Civita connection. Together with the prequantum connection $\nabla^L$, we obtain a connection on $S|_{M\backslash Z}$ given by 
\[
\nabla = \nabla^J \otimes 1 + 1 \otimes \nabla^L, 
\]
satisfying
\[
[\nabla_v, c(w)] = c(\nabla^{\text{LC}}_v w), \hspace{5mm} v, w \in T(M\backslash Z),
\]
where $\nabla^{\text{LC}}$ is the Levi-Civita connection on $T(M\backslash Z)$. The associated Dirac operator is given by the composition
\begin{equation}
\label{E:Diracoperator}	
D^b\colon C^\infty_c(M\backslash Z, S^b)  \xlongrightarrow{\nabla}  C^\infty_c(M\backslash Z, T^*(M\backslash Z) \otimes S^b) \xlongrightarrow{c \circ g^\sharp}  C^\infty_c(M\backslash Z, S^b).
\end{equation}
Then $D^b$ is an odd essentially self-adjoint first-order elliptic differential operator on the complete manifold $M\backslash Z$, cf. \cite[Th.~1.17]{GromovLawson}. We denote by $D^{b,\pm}$ the components of $D^b$ mapping $C^\infty_c(M\backslash Z,S^{b,\pm})\rightarrow C^\infty_c(M\backslash Z,S^{b,\mp})$. 

\begin{definition}\label{D:quantizationregular}
If $Z = \emptyset$, then $M$ is compact and $D=D^b$ is \emph{Fredholm} in the sense that $\ker(D^\pm)$ are finite dimensional. We define the \emph{geometric quantization} of $(M, \omega)$ to be 
\[
Q(M, \omega) := 
\ind(D) =\dim\, \ker(D^+) -\dim\, \ker(D^-) \in \bZ. 
\]
Compare e.g. \cite{MeinrenkenSymplecticSurgery}. By the Atiyah-Singer index theorem, we have the formula:
\[
Q(M, \omega)  = \int_M \Td(M) \Ch(L)=\int_M \Td(M)e^\omega. 
\]
\end{definition}
When $Z \neq \emptyset$, $D^b$ is not Fredholm in general. In order to generalize Definition~\ref{D:quantizationregular} to $b$-symplectic manifolds, we will introduce boundary conditions in the next subsection.

\subsection{APS boundary conditions and the definition of $Q(M,Z,\omega)$}\label{SS:bc}
Let $C \simeq Z \times (-1,1)$ be a collar neighborhood of $Z$ as in Section~\ref{SS:Product}.  Recall from Lemma \ref{lem:prodformomega} that we may assume the metric takes a product form on a possibly smaller collar $C' \subset C$, and without loss of generality we assume $C'\simeq Z \times (-\tfrac{1}{2},\tfrac{1}{2})$. Let $0< \epsilon <\tfrac{1}{2}$. Cutting $M$ along $Z_{\pm \epsilon}:=Z\times \{\pm\epsilon\}$, we obtain a compact manifold with boundary:
\[
M_\epsilon:= M \backslash \big(Z\times (-\epsilon,\epsilon)\big)=M_{\ge \epsilon} \sqcup M_{\le -\epsilon}, \qquad \partial M_{\ge \epsilon}=Z_\epsilon, \quad M_{\le -\epsilon}= Z_{-\epsilon}, 
\]
where the notation is self-explanatory.
\begin{center}
\begin{overpic}[height=5cm]{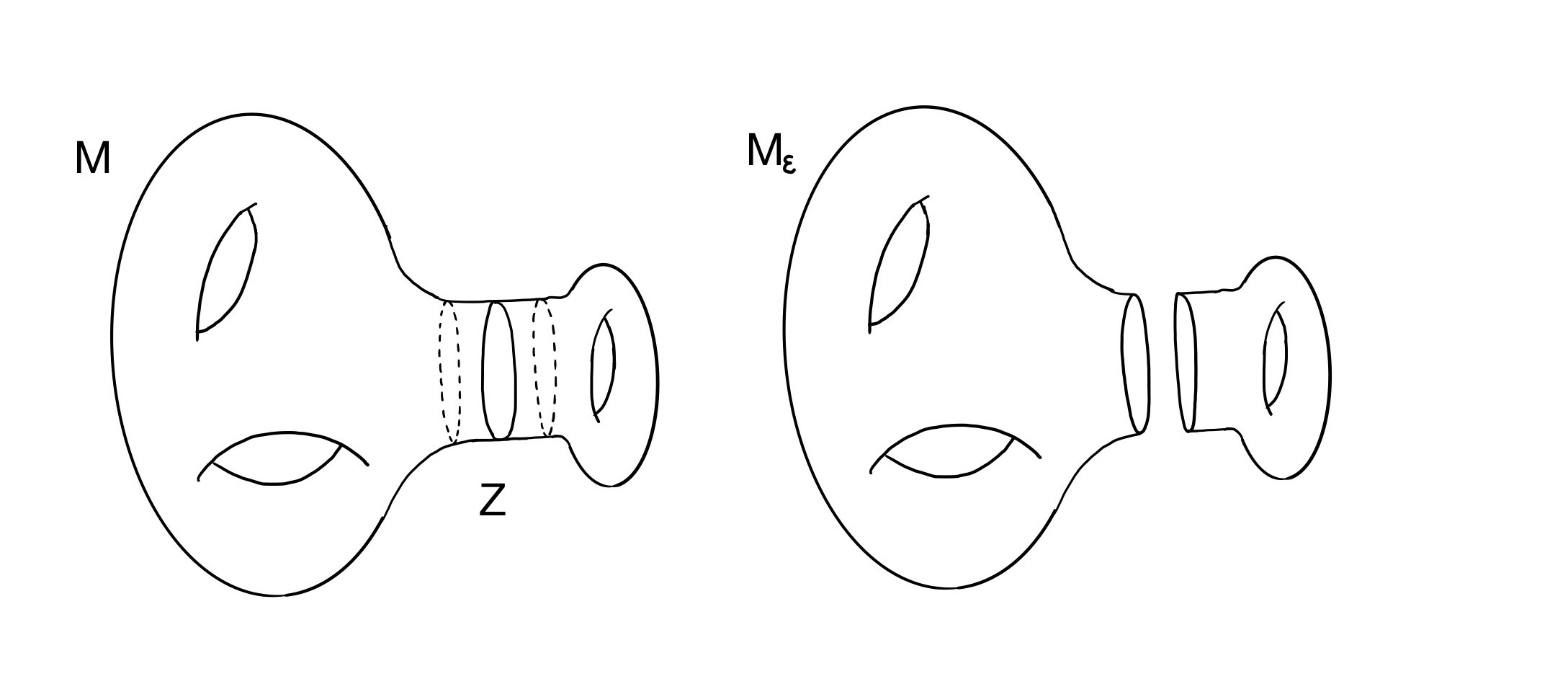}
\end{overpic}\\
Figure 1. The cutting surgery
\end{center}

We will describe a Fredholm boundary value problem on $M_\epsilon$, whose index will generalize Definition \ref{D:quantizationregular}. First it is necessary to modify the grading on $S^b|_{M_\epsilon}$, so that it becomes compatible with the global orientation on $M$. Indeed recall that the grading on $S^b|_{M\backslash Z}$ is compatible with the orientation on $M\backslash Z$ induced by the symplectic form $\omega$.  This differs from the global orientation on $M$ by the locally constant function $\tn{sign}(x)$ on $M\backslash Z$.
\begin{definition}
Let $S=S^b|_{M_\epsilon}$ but introduce a new $\bZ_2$-grading, by reversing the grading on $M_{\le -\epsilon}$:
\[ S^{\pm}\upharpoonright M_{\ge \epsilon}:=S^{b,\pm}, \qquad S^{\pm} \upharpoonright M_{\le -\epsilon}:=S^{b,\mp}.\]
Let $D=D^b$ viewed as an odd operator for the new grading, so that 
\[ D^\pm=(D^{b,\pm}\upharpoonright M_{\ge \epsilon})\sqcup (D^{b,\mp}\upharpoonright M_{\le -\epsilon}).\] 
\end{definition}

We will write $e$ for the section of $^bTM|_C$ such that $a(e)=x\partial_x$; this is consistent with earlier notation because $e$ restricts to the canonical section \eqref{eqn:canontriv} along $Z$. On $C' \cap M_\epsilon$ the operator $D$ takes the form
\begin{equation} 
\label{eqn:DonC}
D^+|_{C'}=c(e)(x\partial_x+A_x).
\end{equation}
Here $A_x$ is the smooth family of essentially self-adjoint first-order elliptic operators on the compact manifold $Z$ given by
\begin{equation} 
\label{eqn:Ax}
A_x=c(e)^{-1}\big(D_Z-2\pi \i \log(|x|)c(e')\big) 
\end{equation}
where $D_Z$ is a Dirac operator on $Z$ not depending on $x$ (defined using the connection $\nabla^{L,0}$ on $L|_Z$, and having symbol $\sigma(D_Z)(\xi)=c(g^\sharp(\xi))$, $\xi \in T^\ast Z$) and recall $e'$ is the section metrically dual to $\alpha_Z$ (see Lemma \ref{lem:prodformomega}).  (The expression \eqref{eqn:DonC} may look more familiar in terms of the coordinate $t=\log(|x|)$.)

We next explain the type of Atiyah-Patodi-Singer \cite{AtiyahPatodiSingerI} (APS) boundary condition that we will use. Aside from the original paper \cite{AtiyahPatodiSingerI}, a concise guide to elliptic boundary value problems requiring relatively modest background knowledge is \cite{BarBallmannGuide} (the monograph \cite{BarBallmann} containing the corresponding technical details). 

The operators $A_{\pm \epsilon}$ are essentially self-adjoint first-order elliptic operators on $Z_{\pm \epsilon}$.  Since the latter are compact manifolds, $A_{\pm \epsilon}$ have discrete spectrum.  Let
\[ C^\infty(M_{\ge \epsilon},S^+)_{A_\epsilon<0} \subset C^\infty(M_{\ge \epsilon},S^+) \]
be the subspace of sections whose restriction to the boundary lies in the closure of the direct sum of the eigenspaces of $A_\epsilon$ with eigenvalue $\lambda < 0$.  One similarly defines $C^\infty(M_{\ge \epsilon},S^+)_{A_\epsilon\le 0}$, as well as similar subspaces for $M_{\le -\epsilon}$ using the operator $A_{-\epsilon}$ in place of $A_\epsilon$. We also set
\[ C^\infty(M_\epsilon,S^+)_{<0}=C^\infty(M_{\ge \epsilon},S^+)_{A_\epsilon<0}\oplus C^\infty(M_{\le -\epsilon},S^+)_{A_{-\epsilon}<0} \subset C^\infty(M_\epsilon,S^+).\]

Suppose first for simplicity that $A_{\epsilon}$ does not have $0$ in its spectrum.  In fact by \eqref{eqn:Ax} this happens if and only if $2\pi \i \log(|\epsilon|)$ is not in the spectrum of the fixed ($\epsilon$-independent) operator $c(e')^{-1}D_Z$, and since the latter has discrete spectrum, this is guaranteed to occur for generic $\epsilon$.  In this case we make the following definition.  The Hilbert space $L^2(M_\epsilon,S^\pm)$ is defined just like $L^2(M\backslash Z,S^\pm)$, using the restrictions of the metrics to $M_\epsilon \subset M\backslash Z$.

\begin{definition}
Suppose $0 \notin \tn{spec}(A_\epsilon)$.  We define the unbounded Hilbert space operator $D_{APS,\epsilon}^+$ to be the closure of the operator with domain $C^\infty(M_\epsilon,S^+)_{<0}$ given by $D_{APS,\epsilon}^+s=D^+s$ for all $s \in C^\infty(M_\epsilon,S^+)_{<0}$.  This is an example of \emph{Atiyah-Patodi-Singer boundary conditions} \cite{AtiyahPatodiSingerI}. 
\end{definition}

More generally if $A_\epsilon$ is not necessarily invertible, then we prefer a slightly more complicated choice (having better continuity properties):
\begin{definition}
\label{def:DAPS}
For any $\epsilon$, we define the unbounded Hilbert space operator $D_{APS,\epsilon}^+$ to be the closure of the operator with initial domain 
\[ \D^\infty_{APS}=C^\infty(M_{\ge \epsilon},S^+)_{A_\epsilon\le 0}\oplus C^\infty(M_{\le -\epsilon},S^+)_{A_{-\epsilon}<0} \] 
given by $D_{APS,\epsilon}^+s=D^+s$ for all $s \in \D^\infty_{APS}$.  In other words, we use the APS boundary condition on $M_{\le -\epsilon}$ and the `dual' APS boundary condition on $M_{\ge \epsilon}$. (`Dual' refers to the fact that the closure of the operator with domain $C^\infty(M_{\ge \epsilon},S^+)_{A_\epsilon\le 0}$ ends up being the adjoint of the operator with domain $C^\infty(M_{\ge \epsilon},S^-)_{A_\epsilon<0}$.)  We refer to these as `APS-like' boundary conditions, for short; they coincide with APS boundary conditions for generic $\epsilon$.
\end{definition}

It is known that the domain $\dom(D_{APS,\epsilon}^+)$ of the closure is the subspace of the Sobolev space $H^1(M_\epsilon,S^+)$ satisfying the boundary conditions (restriction to the boundary being replaced by the trace map $H^1(M_\epsilon,S^+)\rightarrow H^{1/2}(\partial M_\epsilon,S^+)$ in Definition \ref{def:DAPS}), and moreover that
\[ D_{APS,\epsilon}^+\colon \dom(D_{APS,\epsilon}^+)\rightarrow L^2(M_\epsilon,S^-) \]
is a Fredholm operator, cf. \cite{BarBallmannGuide}.

Our generalization of Definition \ref{D:quantizationregular} is as follows.
\begin{definition}
\label{def:geoquant}
We define the \emph{geometric quantization} of the compact $b$-symplectic manifold $(M,Z,\omega)$ to be the index
\[ Q(M,Z,\omega):=\ind(D_{APS,\epsilon}^+) \in \bZ,\]
where $D_{APS,\epsilon}^+$ is the operator on $M_\epsilon$ with APS-like boundary conditions in Definition \ref{def:DAPS}.  We will justify that $Q(M,Z,\omega)$ is independent of $\epsilon$ in Theorem \ref{thm:indexequal} below.  In particular one could equivalently define $Q(M,Z,\omega)$ in terms of a limit of APS-like indices, as $\epsilon \rightarrow 0^+$.
\end{definition}

\subsection{The spin-c structure on a $b$-symplectic manifold}\label{SS:spinor}
The anchor map $a \colon {}^bTM \rightarrow TM$ is a bundle isomorphism over $M \backslash Z$.  In particular the manifold with boundary $M_\epsilon$ has a metric and compatible almost complex structure induced from those on the symplectic vector bundle $({}^bTM,\omega)$.  The boundary $\partial M_\epsilon=Z_\epsilon \sqcup Z_{-\epsilon}$, and one might ask if it is possible to glue the two boundary components $Z_{\pm \epsilon}$ together and obtain a metric and almost complex structure on the manifold (diffeomorphic to $M$)
\[
\widetilde{M} \colon=  M_\epsilon \Big/ (m, \epsilon) \sim (m, -\epsilon), \quad m \in Z.
\]
\begin{center}
\begin{overpic}[height=5cm]{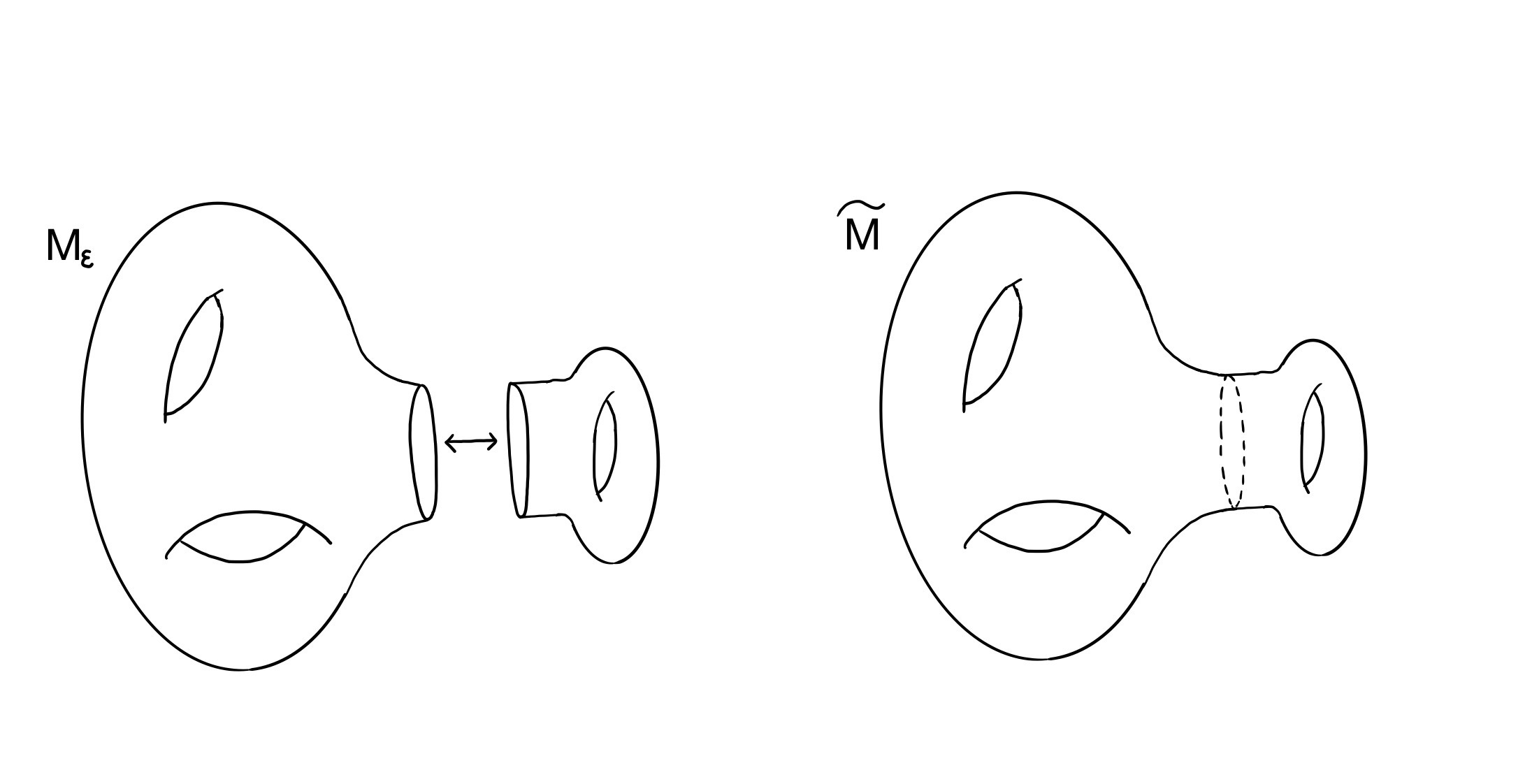}
\end{overpic}\\
Figure 2. The gluing surgery  
\end{center}
By Lemma \ref{lem:prodformomega}, the metric $g$ takes a product form near $Z$, hence can be glued into a metric on $\widetilde{M}$.  However for the almost complex structure (equivalently for the symplectic form), this naive idea does not seem to work, as one can convince oneself by considering the local normal form in Section \ref{SS:Product}. We will show that nevertheless it \emph{is} possible to glue together the corresponding \emph{spin-c structures} to obtain a spin-c structure on the manifold $\ti{M}$ in the ordinary sense.  A more systematic construction illuminating some of the properties of the spin-c structure and avoiding use of local normal forms is given in the appendix.

Denote by $S_{\pm\epsilon}$ the restriction of the spinor bundle $S$ to the boundary component $Z_{\pm \epsilon}$:
\[ S_{-\epsilon}=\wedge {}^bT^{\ast,(0,1)}M \otimes L\big|_{Z_{-\epsilon}}, \qquad S_\epsilon=\wedge {}^bT^{\ast,(0,1)}M\otimes L\big|_{Z_{\epsilon}}.\]
As we have arranged that the metric and complex structure on the vector bundle $^bTM$ take a product form on $C'\simeq Z \times (-\tfrac{1}{2},\tfrac{1}{2})$, we may identify $S_\epsilon$ with $S_{-\epsilon}$ as ungraded $\bC l(^bTM|_Z)$-modules; we will write 
\[ \rho \colon S_\epsilon \rightarrow S_{-\epsilon}\] 
for the corresponding morphism of Clifford modules. Since $S=S^b|_{M\backslash Z}$ but with the $\bZ_2$ grading reversed over $M_{<0}$, the map $\rho$ has \emph{odd} parity. In particular if $z \in \bC l(^bTM|_Z)$ is of odd degree in the Clifford algebra, then $c(z)\circ \rho=\rho \circ c(z)$ maps $S^+_\epsilon$ to $S^+_{-\epsilon}$.

\begin{definition}
We define a smooth grading-preserving bundle map
\[ \tau \colon S_{\epsilon}\rightarrow S_{-\epsilon}, \qquad \tau(s)=c(\Gamma e)\rho(s) \]
where $\Gamma$ is the chirality element \eqref{eqn:chirality1}.  Since $c(\Gamma e)$ commutes with $c(e')$, $D_N$ and anti-commutes with $c(e)^{-1}=-c(e)$, the induced map on smooth sections (also denoted $\tau$) satisfies
\begin{equation} 
\label{eqn:anticommute}
\tau \circ A_\epsilon=-A_{-\epsilon}\circ \tau.
\end{equation}
\end{definition}

\begin{definition}
\label{def:gluetilS}
Define a $\bZ_2$ graded complex vector bundle $\ti{S}$ on $\ti{M}$ by gluing $S_{\epsilon}$ to $S_{-\epsilon}$ using $\tau$:
\[
\widetilde{S} :=  S|_{M_\epsilon} \Big/ (s, \epsilon) \sim (\tau(s), -\epsilon), \qquad s \in S_{\epsilon}.
\]
A continuous section of $\ti{S}$ is by definition a pair $(s_1, s_2)$ of sections over $M_{\ge \epsilon}$, $M_{\le -\epsilon}$ respectively, such that $\tau(s_1|_{Z_\epsilon})=s_2|_{Z_{-\epsilon}}$.  (For a smooth construction, one easily modifies this by gluing on a small collar.)
\end{definition}

Since the anchor map $a \colon {}^bTM \rightarrow TM$ is an isomorphism over $M_\epsilon$, we obtain a $\bC l(TM_\epsilon)$-module structure on $S|_{M_\epsilon}$ given by
\begin{equation} 
\label{eqn:ClmodM}
\ti{c}:=c\circ a^{-1}.
\end{equation}
\begin{lemma}
$\tau$ intertwines the $\bC l(TM_\epsilon)$-module structure \eqref{eqn:ClmodM} along $Z_\epsilon$, $Z_{-\epsilon}$, which therefore descends to a $\bC l(T\ti{M})$-module structure on $\ti{S}$.  Equivalently, $\tau$ defines a trivialization of the line bundle $\Hom_{\bC l(TM|_Z)}(S_\epsilon,S_{-\epsilon})$.
\end{lemma}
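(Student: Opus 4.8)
The plan is to reduce everything to a computation in the Clifford algebra along the common hypersurface $Z$, using that all structures are in product form on the collar $C'$ (Lemma on normal forms for $\omega,J,g$). First I would spell out the identifications. The product structure identifies $^bTM|_{Z_{\pm\epsilon}}$ with $^bTM|_Z\cong \bR e\oplus TZ$, and it identifies $T\ti M$ along the seam with $TM|_{Z_\epsilon}$ and with $TM|_{Z_{-\epsilon}}$; a normal coordinate for $\ti M$ near the seam may be taken to be $u=x-\epsilon$ on the $M_{\ge\epsilon}$ side and $u=x+\epsilon$ on the $M_{\le-\epsilon}$ side, so $\partial_u=\partial_x$ on both sides. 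Since $a(e)=x\partial_x$, the normal vector $\partial_u$ pulls back under $a^{-1}$ to $\epsilon^{-1}e$ along $Z_\epsilon$ but to $-\epsilon^{-1}e$ along $Z_{-\epsilon}$ — the sign change is exactly the change of sign of $x$ from $\epsilon$ to $-\epsilon$ — whereas tangential vectors $w\in TZ$ pull back to $w$ on both sides. Hence, letting $R$ denote the orthogonal reflection of the fibres of $^bTM|_Z$ fixing $TZ$ and negating $\bR e$, the claim that $\tau$ intertwines $\ti c=c\circ a^{-1}$ along $Z_{\pm\epsilon}$ is exactly the statement $\tau\circ c(z)=c(Rz)\circ\tau$ for all $z\in{}^bTM|_Z$ (extended multiplicatively to $\bC l({}^bTM|_Z)$).

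Next I would use that $\rho\colon S_\epsilon\to S_{-\epsilon}$ is by construction a morphism of ungraded $\bC l({}^bTM|_Z)$-modules, so $\rho\circ c(z)=c(z)\circ\rho$ and thus $\tau\circ c(z)=c(\Gamma e)\,c(z)\,\rho$. The lemma therefore reduces to the purely algebraic identity $c(\Gamma e)\,c(z)=c(Rz)\,c(\Gamma e)$, i.e. that conjugation by the Clifford element $\Gamma e$ implements the reflection $R$. Since $e$ is a unit vector and $n=\dim M$ is even (so $\Gamma^2=1$ and $\Gamma$ anticommutes with every degree-one element), one checks $(\Gamma e)^2=1$ and, for $z$ of degree one, $(\Gamma e)\,z\,(\Gamma e)^{-1}=eze=Rz$; equivalently $c(\Gamma e)$ anticommutes with $c(e)$ and commutes with $c(w)$ for all $w\in TZ$, which are precisely the relations $R$ imposes. (These are the same anticommutation relations already recorded just before \eqref{eqn:anticommute}; note the anticommutation with $c(e)$ is what cancels the sign in $a^{-1}(\partial_u)=\pm\epsilon^{-1}e$.) This proves that $\tau$ intertwines the $\bC l(TM_\epsilon)$-action along the two boundary components, so this action descends to a $\bC l(T\ti M)$-module structure on $\ti S$.

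For the reformulation, fibrewise $S_{\pm\epsilon}$ is the spinor module of $\bC l(TM|_Z)\otimes\bC$ twisted by the line $L|_Z$ (and the two $\bC l(TM|_Z)$-actions are to be compared via the collar, i.e. up to the reflection $R$), so $\Hom_{\bC l(TM|_Z)}(S_\epsilon,S_{-\epsilon})$ is a complex line bundle over $Z$. By the intertwining just proved, $\tau=c(\Gamma e)\circ\rho$ is a section of this line bundle, and it is nowhere vanishing: $\rho$ is an isomorphism and $c(\Gamma e)$ is invertible (indeed an involution, from $(\Gamma e)^2=1$). Hence $\tau$ trivialises the line bundle, giving the equivalent formulation.

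I expect the main subtlety to be the bookkeeping of signs and orientations — concretely, verifying that the anchor genuinely reverses the normal direction between $Z_\epsilon$ and $Z_{-\epsilon}$, and that this reversal is exactly compensated by the anticommutator $c(\Gamma e)c(e)=-c(e)c(\Gamma e)$. This is the one point where the grading-reversal convention on $M_{\le-\epsilon}$ built into $S$, and the appearance of the chirality factor $\Gamma$ in the gluing map $\tau$, are essential rather than cosmetic: replacing $\tau$ by $\rho$ or by $c(e)\rho$ alone would fail this compatibility.
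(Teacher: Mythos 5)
Your argument is correct and takes essentially the same route as the paper: both proofs verify the intertwining by checking that $c(\Gamma e)$ commutes with $c(w)$ for $w\in TZ$ and anti-commutes with $c(e)$, the latter cancelling the sign discrepancy in $a^{-1}(\partial_x)=\pm\epsilon^{-1}e$ across the two boundary components. Your repackaging of these two relations as the single statement that $\Ad_{\Gamma e}$ implements the orthogonal reflection fixing $TZ$ and negating $\bR e$ is just a cleaner way of organizing the same computation.
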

\begin{proof}
For $s \in S_\epsilon$, and $v \in TZ$ one has
\[ \ti{c}(v)\tau(s)=c(v)c(\Gamma e)\rho(s)=c(\Gamma e)\rho(c(v)s)=\tau(\ti{c}(v)s) \]
since $v$, $\Gamma e$ commute and $a^{-1}(v)=v$ for $v \in TZ$.  The remaining generator $\partial_x$ requires care, since
\[ \big(a|_{Z_\epsilon})^{-1}(\partial_x)=\tfrac{1}{\epsilon}e, \qquad \big(a|_{Z_{-\epsilon}})^{-1}(\partial_x)=-\tfrac{1}{\epsilon}e \]
differ by a sign.  Thus for $s \in S_\epsilon$
\[ \ti{c}(\partial_x)\tau(s)=-\tfrac{1}{\epsilon}c(e)c(\Gamma e)\rho(s)=c(\Gamma e)\rho(\tfrac{1}{\epsilon}c(e)s)=\tau(\ti{c}(\partial_x)s),\]
where for the second equality we used the fact that $e$, $\Gamma e$ \emph{anti-commute}.
\end{proof}

\begin{definition}
\label{def:associatedspinormodule}
The $\bC l(T\ti{M})$-spinor module $\ti{S}$ will be referred to as the \emph{spin-c structure associated to the b-symplectic manifold} $(M,Z,\omega)$.  Up to homotopy, it does not depend on $\epsilon$.  The \emph{determinant line bundle} (or \emph{anti-canonical line bundle}) of $\ti{S}$ is the complex line bundle
\[ \ti{\L}=\Hom_{\bC l(T\ti{M})}(\ti{S}^\ast,\ti{S}). \]
\end{definition}

\begin{theorem}
\label{thm:indexequal}
$Q(M,Z,\omega)$ equals the index of any spin-c Dirac operator for $\ti{S}$.  In particular $Q(M,Z,\omega)$ is independent of $\epsilon$.
\end{theorem}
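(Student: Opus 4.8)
The plan is to reduce the claim to a gluing formula for APS indices, of the type relating the index on a manifold glued from two pieces along a common boundary hypersurface to the sum of two APS indices on the pieces (the standard splitting/excision principle for Dirac-type operators with APS boundary conditions, cf.\ the discussion around \cite{BarBallmannGuide}). Concretely, $\ti{M}$ is glued from $M_{\ge \epsilon}$ and $M_{\le -\epsilon}$ along $Z_\epsilon \cong Z_{-\epsilon}$, and we have arranged (Lemma~\ref{lem:prodformomega} and the product normal forms) that near this gluing hypersurface the metric, the bundle $\ti{S}$, and the Dirac operator $\ti{D}$ all take a product form, with the operator looking like $c(\partial_x)(\partial_x + A)$ for a fixed self-adjoint elliptic operator $A$ on $Z$. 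First I would make precise how $\ti{D}$ restricts on the collar: under the Clifford identification $\ti{c} = c\circ a^{-1}$ and the gluing map $\tau$, the tangential operator on the $M_{\ge\epsilon}$ side is (a rescaling of) $A_\epsilon$ and on the $M_{\le -\epsilon}$ side is $A_{-\epsilon}$, and \eqref{eqn:anticommute} says $\tau$ conjugates one into minus the other — which is exactly the matching condition making the naive "APS on one side, complementary/dual APS on the other side" decomposition of $\ti{D}$'s index valid.

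The key steps, in order, are: (1) verify that $\ti{D}$, built from the glued spin-c data, agrees with $D$ on the interior of each piece $M_{\ge\epsilon}$, $M_{\le-\epsilon}$ — this is immediate since away from the gluing locus $\ti{S} = S = S^b|_{M\backslash Z}$ (with the reversed grading on the negative side) and nothing has changed; (2) invoke the splitting theorem for APS-type boundary conditions: for a closed manifold $N = N_1 \cup_Y N_2$ cut along a hypersurface with product structure, and a Dirac operator in product form near $Y$ with tangential operator $A$, one has $\ind(D_N) = \ind(D_{N_1,P_{\le 0}}) + \ind(D_{N_2,P_{> 0}})$ where $P_{\le 0}$, $P_{>0}$ are complementary spectral projections of $A$ on $Y$ (with the usual care about how the zero-eigenspace is split between the two sides, which is precisely why Definition~\ref{def:DAPS} uses $A_\epsilon \le 0$ on one side and $A_{-\epsilon} < 0$ on the other); (3) match the two boundary conditions arising from the splitting with the two appearing in Definition~\ref{def:DAPS}, using \eqref{eqn:anticommute} to see that the spectral condition ``$A_\epsilon \le 0$'' transported across $\tau$ to the $Z_{-\epsilon}$ side becomes ``$-A_{-\epsilon} \le 0$'', i.e.\ ``$A_{-\epsilon}\ge 0$'', which is the complement of the condition ``$A_{-\epsilon} < 0$'' imposed on that side — so the two conditions are exactly complementary at $Y$, as the splitting formula requires; (4) conclude $\ind(\ti{D}) = \ind(D_{\ge\epsilon,\,A_\epsilon\le 0}) + \ind(D_{\le-\epsilon,\,A_{-\epsilon}<0}) = \ind(D^+_{APS,\epsilon}) = Q(M,Z,\omega)$, the middle equality being the definition of the operator in Definition~\ref{def:DAPS} as the direct sum of the two boundary-value problems; (5) independence of $\epsilon$ then follows because the left side $\ind(\ti{D})$ manifestly does not depend on $\epsilon$ (the glued manifold, spin-c structure, and operator up to homotopy are $\epsilon$-independent by Definition~\ref{def:associatedspinormodule}), which also retroactively justifies the remark in Definition~\ref{def:geoquant}.

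The main obstacle I expect is step (3) together with the bookkeeping of the zero modes and the grading reversal. The gluing map $\tau$ has odd parity and is built from $c(\Gamma e)$, so one must track carefully how it interacts with the $\bZ_2$-grading on $S^\pm$, with the Clifford action of the inward/outward normals (which differ by a sign between the two sides, as in the proof of the preceding lemma), and with the decomposition of $L^2(Z)$ into spectral subspaces of $A_\epsilon$; the anti-commutation \eqref{eqn:anticommute} is the crucial identity but applying it correctly in the presence of the $A_\epsilon \le 0$ vs.\ $A_{-\epsilon}<0$ asymmetry — and checking that this is precisely the asymmetry needed for a clean splitting without overlap or gap at the zero-eigenspace — is the delicate point. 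A secondary technical issue is that one should cite (or briefly reprove) the APS splitting formula in the generality of spin-c Dirac operators with possibly non-invertible tangential operator and the specific ``APS vs.\ dual APS'' pairing used here; the cleanest route is to note that gluing the two APS-like problems along $Y$ produces exactly the closed-manifold operator $\ti{D}$ because the matching condition at $Y$ built into the direct-sum domain is the $\tau$-gluing condition defining sections of $\ti{S}$, so the two Fredholm problems are literally ``two halves'' of $\ti{D}$ and the index additivity is then the standard excision/splitting statement.
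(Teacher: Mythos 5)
Your overall strategy — invoke the B\"ar–Ballmann splitting theorem, track the $\bZ_2$-grading reversal and the odd parity of $\tau$, and use the anti-commutation $\tau\circ A_\epsilon=-A_{-\epsilon}\circ\tau$ from \eqref{eqn:anticommute} to show the two APS-like conditions in Definition~\ref{def:DAPS} are exactly $L^2$-orthogonal complements at the gluing hypersurface — is the same route the paper takes, and your steps (2)--(5) and the discussion of the zero-eigenspace bookkeeping are accurate. However, there is a genuine gap in your step (1), and it is where the paper does its real work.

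You assert that $\ti{D}$ ``agrees with $D$ on the interior of each piece $M_{\ge\epsilon}$, $M_{\le-\epsilon}$ --- this is immediate since away from the gluing locus $\ti{S}=S$ and nothing has changed,'' and that near the gluing locus the operator takes a product form $c(\partial_x)(\partial_x+A)$ with a \emph{fixed} tangential operator $A$. Neither of these is true for the operator $D$ of Definitions~\ref{D:prequantum}--\ref{def:DAPS}. The tangential operator is $A_x=c(e)^{-1}\big(D_Z-2\pi\i\log(|x|)c(e')\big)$, which depends nontrivially on $x$ through the $\log(|x|)$ term coming from the log prequantum connection $\nabla^L=\nabla^{L,0}-2\pi\i\log(|x|)\pi^\ast\alpha_Z$. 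Lemma~\ref{lem:prodformomega} gives a product form for $g$ and $J$, but not for $\nabla^L$: under any gluing diffeomorphism $f$ with $f(\epsilon)=-\epsilon$ the connection $1$-form $\log(|x|)\pi^\ast\alpha_Z$ does not pull back to itself, so the restrictions of $D$ to the two pieces simply do not patch to a smooth first-order operator on $\ti M$. Consequently ``the two Fredholm problems are literally two halves of $\ti D$'' is false for $D$, and your step (2) (the splitting theorem) cannot be applied directly to $D$.

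What is missing is a homotopy step, and this is precisely what the paper inserts before applying the splitting theorem: deform the family $\{A_x\}$ through adapted boundary operators to the \emph{constant} family $\ti A=c(e)^{-1}D_Z$, while \emph{keeping the boundary condition at $x=\pm\epsilon$ fixed} (still defined by spectral subspaces of $A_{\pm\epsilon}$). Because the domain does not move and the operators stay elliptic with the fixed elliptic boundary condition, the Fredholm index is constant along this homotopy, so $\ind(D^+_{APS,\epsilon})=\ind(\ti D^+_{APS,\epsilon})$. Only the deformed operator $\ti D$, with its $x$-independent tangential part $D_Z$, glues (after conjugating by $f(x)=-\epsilon^2/x$, under which $x\partial_x\mapsto -x\partial_x$, and using that $\tau$ anticommutes with $c(e)$ and commutes with $D_Z$) to a bona fide spin-c Dirac operator for $\ti S$ on $\ti M$. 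Once that is in place, your steps (2)--(5) go through exactly as you describe and yield $\ind(\ti D^+)=\ind(D^+_{APS,\epsilon})$, with $\epsilon$-independence following since $\ti M$ is closed. So the proposal is salvageable, but as written it silently conflates $D$ with $\ti D$ and omits the index-preserving deformation that makes the identification legitimate.
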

\begin{proof}
Recall $Q(M,Z,\omega)=\ind(D_{APS,\epsilon}^+)$ was defined as the index of (the closure of) the operator with domain
\[ \D^\infty_{APS}=C^\infty(M_{\ge \epsilon},S^+)_{A_\epsilon\le 0}\oplus C^\infty(M_{\le -\epsilon},S^+)_{A_{-\epsilon}<0}, \]
given by $D_{APS,\epsilon}^+s=D^+s$, and that near $x=\pm \epsilon$, 
\begin{equation} 
\label{eqn:DonC2}
D|_{C'}=c(e)(x\partial_x+A_x)
\end{equation}
where $A_x$ is the smooth family of first-order elliptic operators on the compact manifold $Z$ given by
\begin{equation} 
\label{eqn:Ax2}
A_x=c(e)^{-1}\big(D_Z-2\pi \i \log(|x|)c(e')\big). 
\end{equation}
We may perform a homotopy of the operator $D$ to replace the family $\{A_x|0\ne x \in (-\tfrac{1}{2},\tfrac{1}{2})\}$ of operators on $Z$ with the constant family of operators $\ti{A}_x=\ti{A}$, $0\ne x \in (-\tfrac{1}{2},\tfrac{1}{2})$ where
\[ \ti{A}=c(e)^{-1}D_Z.\]
This results in a new Dirac operator $\ti{D}$ on $M \backslash Z$ such that
\begin{equation} 
\label{eqn:tilDonC}
\ti{D}|_{C'}=c(e)(x\partial_x+\ti{A})=c(e)x\partial_x+D_Z.
\end{equation}
In doing this we keep the boundary condition fixed (i.e. it is still specified in terms of spectral subspaces for the operators $A_{\pm\epsilon}$); in other words, the domains of the operators in the homotopy do not vary.  The resulting operator $\ti{D}_{APS,\epsilon}^+$ has the same index. 

We claim that the new Dirac operator $\ti{D}$ can be glued smoothly to a spin-c Dirac operator on $\ti{M}$ acting on $\ti{S}$.  To see this we describe a smooth version of the gluing construction in Definition \ref{def:gluetilS}.  In terms of the logarithmic coordinate $x$ that we are using, a suitable gluing diffeomorphism on the base may be given by the identity map on $Z$ and the diffeomorphism (note $2\epsilon^2<\epsilon<\tfrac{1}{2}$ since $0<\epsilon<\tfrac{1}{2}$):
\[ f \colon (2\epsilon^2,\tfrac{1}{2})\rightarrow (-\tfrac{1}{2},-2\epsilon^2), \qquad f(x)=-\epsilon^2 x^{-1}.\]
Then $f(\epsilon)=-\epsilon$ so $f$ may be used to glue together $M_{\ge 2\epsilon^2}$, $M_{\le -2\epsilon^2}$ along the collars
\[ C_1=Z \times (2\epsilon^2,\tfrac{1}{2}) \subset M_{\ge 2\epsilon^2}, \qquad C_2=Z \times (-\tfrac{1}{2},-2\epsilon^2) \subset M_{\le -2\epsilon^2},\]
giving a smooth manifold $\ti{M}$. Let
\[ q \colon M_{2\epsilon^2} \rightarrow \ti{M} \]
be the smooth gluing map, which identifies pairs of points in a collar neighborhood of the boundary of $M_{2\epsilon^2}$ according to the map $f\times \id_Z$ defined above.

Combining $f$ with the gluing map $\tau$ on the spinor bundle, we obtain a smooth version $\tau_{\tn{sm}}$ of $\tau$ that may be used in a smooth construction of $\ti{S}$.  Under $f$ the operator $x\partial_x$ goes to $-x\partial_x$.  Together with \eqref{eqn:tilDonC} and the fact that $\tau$ anti-commutes with $c(e)$ and commutes with $D_Z$ (see \eqref{eqn:anticommute}), we deduce
\[ \tau_{\tn{sm}}\circ \ti{D}=\ti{D} \circ \tau_{\tn{sm}} \]
which verifies that the operator $\ti{D}$ descends to a Dirac operator on $\ti{M}$ for $\ti{S}$.  

We now invoke a special case of the Splitting Theorem \cite[Theorem 8.17]{BarBallmann} (or see \cite[Theorem 4.5]{BarBallmannGuide}) for elliptic boundary value problems, which we briefly recall now.  Given a first-order essentially self-adjoint elliptic operator $\D$ acting on sections of $E$ on a closed manifold $\ti{M}$, we may cut $\ti{M}$ along a hypersurface $N$ to obtain a manifold $\ti{M}'$ with boundary $N_1\sqcup N_2$, and operator $\D'$.  If $B_1 \subset H^{1/2}(N,E)$ is an elliptic boundary condition with $L^2$-orthogonal complement $B_2 \subset H^{1/2}(N,E)$ then 
\[ \ind(\D)=\ind(\D'_{B_1\sqcup B_2}) \] 
where $\D'_{B_1\sqcup B_2}$ denotes the operator with domain specified by the boundary condition $B_1\sqcup B_2$ along $N_1\sqcup N_2=\partial \ti{M}'$.  For the case at hand we set $E=\ti{S}^+$, $N=q(Z_\epsilon)=q(Z_{-\epsilon})$, $\D=\ti{D}^+$. We may identify $\ti{M}'$ with $M_\epsilon$, and $N_1=Z_{\epsilon}$, $N_2=Z_{-\epsilon}$.  We take the boundary condition $B_1$ to be
\begin{equation} 
\label{eqn:defB1}
B_1=H^{1/2}(N,\ti{S}^+)_{A_\epsilon \le 0}=H^{1/2}(Z_\epsilon,S^+)_{A_\epsilon \le 0} 
\end{equation}
This corresponds to the `dual' APS boundary condition along $N_1=Z_\epsilon$, used in the Definition \ref{def:DAPS} of $D^+_{APS,\epsilon}$. Its $L^2$-orthogonal complement is
\[ B_2=H^{1/2}(N,\ti{S}^+)_{A_\epsilon>0}.\]
To compare with the boundary conditions used in Definition \ref{def:DAPS}, $B_2$ must be identified with a subspace of $H^{1/2}(Z_{-\epsilon},S^+)$.  By construction of $\ti{S}$, the identification $H^{1/2}(N,\ti{S}^+)\simeq H^{1/2}(Z_{-\epsilon},S^+)$ is given by the gluing map $\tau$.  By equation \eqref{eqn:anticommute}, $A_\epsilon\circ \tau=-\tau\circ A_{-\epsilon}$, hence $B_2$ identifies with
\[ B_2=H^{1/2}(Z_{-\epsilon},S^+)_{A_{-\epsilon}<0},\]
and this corresponds to the APS boundary condition along $Z_{-\epsilon}$ used in the Definition \ref{def:DAPS} of $D^+_{APS,\epsilon}$.  This verifies that $\ti{D}^+$, $D^+_{APS,\epsilon}$ satisfy the hypotheses of the Splitting Theorem, and thus
\[ \ind(\ti{D}^+)=\ind(D^+_{APS,\epsilon}).\]
Finally, since $\ti{M}$ is closed, the index of $\ti{D}$ is invariant under homotopy, and does not depend on $\epsilon$. 
\end{proof}

\begin{theorem}
For any  $b$-symplectic manifold $(M, \omega, Z)$, we have that 
\begin{equation}
\label{eqn:AtiySing}
Q(M, Z, \omega) = \int_M \widehat{A}(TM) \mathrm{Ch}(\ti{\L})^{1/2}=\int_M \tn{Td}(^bTM)\Ch(L)=\lim_{\epsilon \rightarrow 0^+} \int_{M_\epsilon} \tn{Td}(^bTM)e^\omega. 
\end{equation}
\end{theorem}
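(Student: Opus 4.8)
The plan is to establish the three equalities in \eqref{eqn:AtiySing} separately, moving from the most ``topological'' formulation to the ``$b$-symplectic'' one. The first equality is the standard Atiyah--Singer index formula for a spin-c Dirac operator: by Theorem~\ref{thm:indexequal}, $Q(M,Z,\omega)=\ind(\ti{D})$ for a spin-c Dirac operator on the closed manifold $\ti{M}$ (diffeomorphic to $M$) associated to the spin-c structure $\ti{S}$, and the index of such an operator is $\int_{\ti{M}}\Ahat(T\ti{M})\,\Ch(\ti{\L})^{1/2}$. Since $\ti{M}$ is diffeomorphic to $M$ as an oriented manifold, $\Ahat(T\ti{M})=\Ahat(TM)$ under this diffeomorphism, giving the first expression.

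For the second equality I would compare the two characteristic forms $\Ahat(TM)\,\Ch(\ti{\L})^{1/2}$ and $\Td({}^bTM)\,\Ch(L)$ as cohomology classes on $M$. The key point is to identify $\ti{\L}$, the anti-canonical line bundle of $\ti{S}$, in terms of $L$ and the canonical line bundle of the complex structure $J$ on ${}^bTM$. Over $M\setminus Z$ the anchor $a$ is an isomorphism, so there $\ti{S}\cong S^b|_{M\setminus Z}$ as Clifford modules and hence $\ti{\L}|_{M\setminus Z}\cong K^{-1}\otimes L^{\otimes 2}|_{M\setminus Z}$, where $K^{-1}=\wedge^n {}^bT^{(1,0)}M$; the gluing map $\tau$ involves $c(\Gamma e)$, an even Clifford element, and therefore acts as an \emph{isomorphism} of determinant lines, so this identification extends across $Z$: $\ti{\L}\cong K^{-1}\otimes L^{\otimes 2}$ as smooth complex line bundles on $\ti{M}\cong M$, where now $K$ is the canonical line bundle of $({}^bTM,J)$. (This is exactly the kind of statement the appendix is said to make transparent; I would either cite it or verify it directly from the gluing formula for $\tau$.) Given this, the usual manipulation $\Ahat\cdot\Ch(\ti{\L})^{1/2}=\Ahat\cdot e^{c_1(K^{-1})/2}\cdot\Ch(L)=\Td({}^bTM)\cdot\Ch(L)$ applies verbatim, using that the tangent-bundle characteristic classes are computed from $c_1({}^bTM)=c_1(TM)$ together with the higher Chern classes, which agree for $TM$ and ${}^bTM$ since the anchor is an isomorphism away from the codimension-one set $Z$ and the classes are determined on $M$ up to torsion—more carefully, one notes $TM$ and ${}^bTM$ are isomorphic as \emph{complex} vector bundles after the homotopy used to build $\ti{S}$, or simply that $\Td$ of either plugs into the index formula to give the same integer. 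I would phrase this step as: the de Rham form $\Td({}^bTM)\Ch(L)$ represents the same cohomology class on $M\cong\ti{M}$ as $\Ahat(T\ti{M})\Ch(\ti{\L})^{1/2}$, hence has the same integral.

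For the third equality, $\int_M \Td({}^bTM)\Ch(L)=\lim_{\epsilon\to 0^+}\int_{M_\epsilon}\Td({}^bTM)e^\omega$, I would argue as in the discussion of $b$-volume in the introduction. Choose a connection on $L$ that equals the log prequantum connection $\nabla^L$ of Definition~\ref{D:prequantum} on $M\setminus Z$; its curvature is $-2\pi\i\,\omega|_{M\setminus Z}$, so $\Ch(L)$ is represented on $M\setminus Z$ by $e^{\omega}$. The form $\Td({}^bTM)e^\omega$ is an honest smooth $b$-form on $M$ whose restriction to $M_\epsilon$ is $\Td({}^bTM)\Ch(L)|_{M_\epsilon}$ up to an exact form supported near $Z$; integrating over $M_\epsilon$ and letting $\epsilon\to0^+$, the correction and the ``missing'' collar $Z\times(-\epsilon,\epsilon)$ contributions vanish because the relevant forms, written in the coordinate $t=\log|x|$, decay (the $dx/x$-singularities of $\omega$ pair against each other in $e^\omega=\sum\omega^k/k!$ to give top-degree forms that are, by the normal-form Lemma, pullbacks from $Z$ times $dx/x\wedge$ something and integrate to a convergent limit). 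Concretely I would invoke the Mazzeo--Melrose picture: $[\Td({}^bTM)e^\omega]$ is a top-degree $b$-cohomology class, its image under the Mazzeo--Melrose map in $H^{top}_{dR}(M)$ pairs with $[M]$ to give $\int_M\Td({}^bTM)\Ch(L)$, and that pairing is computed precisely by the regularized limit $\lim_{\epsilon\to0^+}\int_{M_\epsilon}$.

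\textbf{Main obstacle.} The crux is the identification $\ti{\L}\cong K^{-1}\otimes L^{\otimes2}$ across the hypersurface $Z$—i.e. controlling exactly how the gluing map $\tau=c(\Gamma e)\rho$ acts on determinant lines and checking that it is the \emph{identity} (not merely an isomorphism) under the natural identifications, so that no twist by a line bundle supported near $Z$ is introduced. This is where one must either lean on the conceptual construction in the appendix or do a careful local computation with the chirality element and the frame $\{e,e'\}$. The analytic half of the third equality—justifying that the limit exists and equals the cohomological pairing—is routine given the product normal forms of Section~\ref{SS:Product} and the standard $b$-calculus/Stokes argument, so I expect it to be straightforward to write but tedious.
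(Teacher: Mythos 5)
Your overall structure tracks the paper's proof quite closely: the first equality is Atiyah--Singer for the spin-c Dirac operator on $\ti{M}$; the second is the standard relation between the $\Ahat$ and Todd integrands once one identifies determinant line bundles; the third is a regularization argument based on the decomposition of Lemma~\ref{lem:decomp}. That said, there are two points worth correcting.

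First, a small but real error: you call $c(\Gamma e)$ ``an even Clifford element.'' It is odd --- $\Gamma$ has even degree (it is a product of $n$ unit vectors, $n$ even) and $e$ has degree one, so $\Gamma e$ has odd degree. It is the composite $\tau = c(\Gamma e)\rho$ that is grading-preserving, precisely because $\rho$ is also odd. Second, and more to the point, ``even $\Rightarrow$ isomorphism of determinant lines'' is not really the relevant argument: any isomorphism of Clifford modules induces an isomorphism of determinant lines, so that observation is free, and it does not by itself preclude the gluing from twisting $\ti{\L}$ by a nontrivial line bundle supported near $Z$. What actually rules this out is the computation in Theorem~\ref{thm:spincstr}: $\det(\Gamma e) = \i^n$ is a \emph{constant}, so the induced surgery on the determinant line bundle is multiplication by a constant in $U(1)$, homotopic to the identity, hence does not change the topological type. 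You correctly flag this as the crux and defer to the appendix; the paper cites precisely this theorem. On the related matter of $\widehat{A}(TM)=\widehat{A}({}^bTM)$, your ``codimension one / up to torsion'' heuristic is shaky (Pontryagin classes are not in general determined by the restriction to $M\setminus Z$); the paper's Remark~\ref{rem:gluingotherstructures} supplies the clean argument, namely that one can choose connections matching under the gluing, so the Chern--Weil forms agree.

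For the third equality, the paper's argument is more elementary than invoking the Mazzeo--Melrose isomorphism: writing $\Omega = \frac{dx}{x}\wedge \pi^\ast\Omega_Z' + \Omega''$ as in Lemma~\ref{lem:decomp}, one has
\[
\lim_{\epsilon\to 0^+}\int_{M_\epsilon}\Omega \;=\; \int_M \Omega'' \;+\; \lim_{\epsilon\to 0^+}\left(\int_{|x|\ge\epsilon}\frac{dx}{x}\right)\int_Z\Omega_Z' \;=\; \int_M \Omega'',
\]
since $\int_{|x|\ge\epsilon} dx/x = 0$ identically by the antisymmetry of $dx/x$ on the symmetric interval $|x|\ge\epsilon$. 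Both routes give the same conclusion, but this one is self-contained and avoids having to separately justify that the Mazzeo--Melrose pairing is computed by the regularized limit.
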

\begin{proof}
The first expression is the Atiyah-Singer formula for $\ind(\ti{D}^+)$.  One has $\widehat{A}(TM)=\widehat{A}(^bTM)$ (see Remark \ref{rem:gluingotherstructures}), and $\wedge{}^bT^{\ast,(0,1)}M$, $\ti{S}$ have isomorphic determinant line bundles (see Theorem \ref{thm:spincstr}), so the second expression follows from the first via the usual calculation relating the Hirzebruch-Riemann-Roch and spin-c versions of the Atiyah-Singer integrand.  The third expression follows from the second, together with the following general observation: if $\Omega$ is a top degree $b$-differential form and
\[ \Omega=\frac{dx}{x}\wedge \pi^\ast \Omega_Z'+\Omega'' \]
is a decomposition as in Lemma \ref{lem:decomp}, then
\[ \lim_{\epsilon\rightarrow 0^+}\int_{M_\epsilon} \Omega=\int_M \Omega''+\lim_{\epsilon \rightarrow 0^+} \int_{|x|\ge \epsilon} \frac{dx}{x} \int_Z \Omega_Z'=\int_M \Omega''+0\cdot \int_Z \Omega_Z'=\int_M \Omega''.\]
\end{proof}

\section{The equivariant case: quantization commutes with reduction}\label{S:qr}

In this section we consider a Hamiltonian action of a compact connected Lie group $G$ on $(M,Z,\omega)$. We define the geometric quantization $Q_G(M,Z,\omega)$ to be the equivariant index of the APS-like boundary value problem for $D^+_{\APS}$.  Under the assumption that the action of $G$ has non-zero modular weights, we show that $Q_G(M,Z,\omega)$ satisfies the Guillemin-Sternberg ``quantization commutes with reduction'' property.  In particular our quantization coincides with the formal geometric quantization defined in \cite{guillemin2018geometric}, and thus provides a positive answer to a question posed in \emph{loc. cit.}

\def\g{\ensuremath{\mathfrak{g}}}

\subsection{Hamiltonian group actions on a $b$-symplectic manifolds}\label{SS:hamiltonian}
Suppose that a compact connected Lie group $G$ acts on $M$ preserving $(Z,\omega)$.  Let $\kg$ denote the Lie algebra of $G$ and let $\kg^*$ be its dual. For $X\in \kg$ we denote by 
\begin{equation} 
\label{eqn:infaction}
X_M(m)=\frac{d}{dt}\bigg|_{t=0}\exp(-tX)\cdot m
\end{equation}
the vector field on $M$ generated by the infinitesimal action of $X$ on $M$, and likewise $X_Z$ for the restriction of $X_M$ to $Z$. More generally if $X$ is a smooth map to $\g$, then the same expression \eqref{eqn:infaction} but with $X$ evaluated at $m$ defines a vector field on $M$ still denoted by $X_M$.  Let $Z= \sqcup_{j=1}^k Z_j$ where the $Z_j$'s are connected components of $Z$. Since $G$ is assumed connected, each $Z_j$ is preserved by $G$.  

Let $x$ be a $G$-invariant global defining function for $Z$ as in Lemma \ref{lem:prodformomega}, with $|x|=1$ outside a $G$-invariant collar neighborhood $C\simeq Z \times (-1,1)_x$, and let $\pi \colon C \rightarrow Z$ be the $G$-equivariant projection map to $Z$. Recall that the $b$ symplectic form admits a decomposition
\begin{equation} 
\label{eqn:normalform1}
\omega=\frac{dx}{x}\wedge \pi^\ast \alpha_Z+\beta,
\end{equation}
where $\alpha_Z$ is a closed 1-form on $Z$ that is independent of any choices, hence is automatically $G$-invariant (hence so is $\beta$).
\begin{definition}
\label{def:modularweight}
Let $c \colon Z \rightarrow \g^\ast$ be the smooth function defined by
\[ \pair{c}{X}=\iota(X_Z)\alpha_Z, \qquad X \in \g.\]
Since $\alpha_Z$ is $G$-invariant and closed, the Cartan formula implies that $c$ is locally constant.  Then, $G$-invariance of $\alpha_Z$ implies that $c$ takes values in $\mf{z}^\ast \subset \g^\ast$ where $\mf{z}\subset \g$ is the center.  We call $c$ the \emph{modular weight function}, or just the \emph{modular weight} for short.
\end{definition}

\begin{definition}\label{D:hamiltonian}
We say that the action of $G$ on a $b$-symplectic manifold $(M,Z,\omega)$ is {\em Hamiltonian} if there exists a $G$-equivariant {\em moment map} $\mu\colon M\backslash Z\to \kg^*$ such that
\begin{enumerate}
\item $d\pair{\mu}{X}=-\iota(X_M)\omega$,
\item the function $\ol{\mu}:=\mu-\log(|x|)\pi^\ast c$ extends smoothly to $M$.
\end{enumerate}
Note that by assumption, outside the collar $C$, $|x|=1 \Rightarrow \log(|x|)=0$, hence $\log(|x|)\pi^\ast c$ can be viewed as a $\mf{z}^\ast$-valued function on $M$. Comparing with \eqref{eqn:normalform1}, we see $d\pair{\ol{\mu}}{X}=-\iota(X_M)\beta$, i.e. $\ol{\mu}$ is an ordinary moment map for the presymplectic form $\beta$.
\end{definition}

\subsection{$[Q,R]=0$ conjecture for $b$-symplectic manifolds}
Let $(M,Z,\omega,\mu)$ be a $b$-symplectic manifold with a Hamiltonian $G$-action, where $G$ is assumed to be compact and connected.  Suppose $M$ is prequantizable, and let $L$ be a prequantum line bundle.  Lift the infinitesimal $\g$ action on $M$ to $L$ via the Kostant formula: 
\[ \L_X=\nabla^{L,0}_{X_M}+2\pi \i \pair{\ol{\mu}}{X}, \qquad X \in \g.\]
If we use the definitions $\nabla^L_{X_M}=\nabla^{L,0}_{X_M}-2\pi \i \log(|x|)\pi^\ast\alpha_Z(X_Z)$, $\alpha_Z(X_Z)=\pair{c}{X}$ and $\ol{\mu}=\mu-\log(|x|)\pi^\ast c$, we see that over $M \backslash Z$ this is equivalent to
\[ \L_X=\nabla^L_{X_M}+2\pi \i \pair{\mu}{X}, \qquad X \in \g.\]
We furthermore assume that the $\g$-action integrates to a $G$-action. Proceeding as in Section \ref{SS:bc}, the operator $D^+_{APS,\epsilon}$ is $G$-equivariant and Fredholm, hence has a $G$-equivariant index lying in the character ring $R(G)$, and as in Definition \ref{D:quantizationregular}, we define
\begin{equation} 
\label{eqn:QG}
Q_G(M,Z,\omega)=\ind_G(D^+_{APS,\epsilon}) \in R(G).
\end{equation}

The constructions in Section \ref{SS:spinor} now lead to a $G$-equivariant spinor bundle $\ti{S}\rightarrow \ti{M}$, and Theorem \ref{thm:indexequal} goes through:
\[ Q_G(M,Z,\omega)=\ind_G(\ti{D}) \]
where $\ti{D}$ is any $G$-equivariant spin-c Dirac operator for $\ti{S}$.  As before it follows in particular that \eqref{eqn:QG} is independent of $\epsilon$, and one could equivalently define $Q_G(M,Z,\omega)$ in terms of a limit as $\epsilon \rightarrow 0^+$.

\begin{conjecture}\label{C:qr}
If $0$ is a regular value of the moment map and if the reduced space 
\[
M_\text{red} = \mu^{-1}(0)/G, \quad Z_{\text{red}} =( \mu^{-1}(0)\cap Z) /G
\]
is a $b$-symplectic manifold (orbifold), then
\[
[Q_G(M, Z, \omega)]^G = Q(M_{\text{red}}, Z_{\text{red}},  \omega_{\text{red}}) \in \bZ. 
\]
\end{conjecture}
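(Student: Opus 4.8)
The plan is to push the computation onto the closed spin-c manifold $\ti M$ using Theorem~\ref{thm:indexequal}, and then to localize the $G$-invariant part of the index onto the compact set $\mu^{-1}(0)$, which the nonzero modular weight forces to lie well inside the honestly symplectic region $M\setminus Z$. First I would invoke the equivariant form of Theorem~\ref{thm:indexequal}, so that $Q_G(M,Z,\omega)=\ind_G(\ti D)\in R(G)$ for the $G$-equivariant spin-c Dirac operator $\ti D$ for $\ti S$ constructed there; the relevant feature of $\ti D$ is that on the open $G$-invariant subset $M_\epsilon\subset M\setminus Z$ it is the ordinary Dolbeault--Dirac operator of $(L,\nabla^L)$ for the almost complex structure transported through the anchor $a$, with equivariant structure given by the Kostant lift $\L_X=\nabla^L_{X_M}+2\pi\i\pair{\mu}{X}$. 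Hence the ``spin-c moment map'' of $(\ti S,\ti D)$ --- the $\g^\ast$-valued function whose associated Clifford-multiplication deformation governs the localization below --- differs on $M_\epsilon$ from $\mu$ only by the moment map of an honest $G$-equivariant line bundle on all of $M$, in particular by a function that stays bounded near $Z$.

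Next I would exploit the hypothesis of nonzero modular weights in two ways. Since $\ol\mu=\mu-\log(|x|)\pi^\ast c$ extends smoothly to $M$ and $c$ is nowhere zero on $Z$ (it is a nonzero, locally constant, central-valued function there), $\mu\to\infty$ as $|x|\to0$; thus $\mu$ is proper on $M\setminus Z$, the zero set $K:=\mu^{-1}(0)$ is compact and disjoint from the collar $C$ (after shrinking $\epsilon$, $K\subset M_\epsilon$), and the same is true of the spin-c moment map by the previous paragraph. In particular $Z_{\mathrm{red}}=(\mu^{-1}(0)\cap Z)/G=\emptyset$, so the hypothesis of Conjecture~\ref{C:qr} says $(M_{\mathrm{red}},\omega_{\mathrm{red}})$ is an ordinary symplectic orbifold, whose quantization is given by Definition~\ref{D:quantizationregular}; and on a small $G$-invariant neighborhood $U$ of $K$ with $\overline U\subset M\setminus Z$ all of the $b$-structures coincide with the corresponding ordinary ones, so $\mu^{-1}(0)/G$ with its reduced symplectic form and reduced prequantum line bundle is exactly the symplectic reduction of $U$. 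The second use: taking $X=c(z)\in\mf z$ gives $\iota(c(z)_Z)\alpha_Z=\pair{c(z)}{c(z)}>0$ for an invariant metric on $\g$, so the fundamental vector field $c_M$ is nowhere zero on the collar $C$; combined with $\mu\sim\log(|x|)\pi^\ast c$ near $Z$, this shows that near the gluing hypersurface $N\subset\ti M$ the spin-c moment map is large and its Kirwan vector field is nonvanishing.

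With this in hand I would choose a $G$-invariant $\g^\ast$-valued function $\Phi$ on $\ti M$ equal to $\mu$ near $K$ and equal to a large positive multiple of $\pi^\ast c$ near $N$, interpolated so that its Kirwan vector field $v_\Phi$ (given by $v_\Phi(m)=(\Phi(m))_{\ti M}(m)$ via an invariant metric) vanishes only on $K$ and on sets where $\Phi\neq0$; the nonvanishing of $c_M$ near $Z$ is what makes such an interpolation possible across the gluing region. Deforming $\ti D$ to $\ti D_t=\ti D+t\sqrt{-1}\,c(v_\Phi)$ and letting $t\to\infty$, the analytic localization theorem of Ma--Zhang \cite{MaZhangTransEll} identifies $[\ind_G(\ti D)]^G=[\ind_G(\ti D_t)]^G$ with the contribution of the component $K$ of $\{v_\Phi=0\}$, the remaining components contributing nothing to the invariant part because $\Phi$ is nonzero there. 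On the honestly symplectic $U\supset K$ this contribution equals, by the classical $[Q,R]=0$ theorem for symplectic (orbifold) quotients, the index of the spin-c Dirac operator on $\mu^{-1}(0)/G$, i.e.\ $Q(M_{\mathrm{red}},\omega_{\mathrm{red}})=Q(M_{\mathrm{red}},Z_{\mathrm{red}},\omega_{\mathrm{red}})$, which is the claim.

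The hard part will be the middle step: constructing the global deformation on $\ti M$ and verifying the hypotheses of the Ma--Zhang localization theorem uniformly, in particular across the gluing region $N$, where $\mu$ itself does not descend to a well-defined function on $\ti M$ and where one must instead control $\Phi$ and its Kirwan vector field by hand. This is exactly the place where the assumption of nonzero modular weights is indispensable, entering through the nonvanishing of $c_M$ near $Z$. A secondary, but routine, point is to run the classical symplectic $[Q,R]=0$ input and the localization in the orbifold category.
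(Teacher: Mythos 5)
Your proposal and the paper's proof share the same analytic skeleton (localization via a Witten/Tian--Zhang/Ma--Zhang style deformation by the Clifford action of a Kirwan-type vector field, with the non-zero modular weight hypothesis guaranteeing that the relevant vanishing locus is compact and sits in the honestly symplectic region), but you take a genuinely different route at the crucial point. The paper never leaves the manifold with boundary $M_\epsilon$: it deforms the \emph{boundary operators} $A_{\pm\epsilon}$ to $A_{\pm\epsilon,T}$, invokes Ma--Zhang (Theorem~\ref{thm:MaZhang}) to compute the invariant index of $D^+_{APS,\epsilon,T}$ for $T$ large, and then supplies the new technical input of the section --- Lemma~\ref{lem:invertibility} (uniform invertibility of $B_{y,T}^G$ for $|y|<\epsilon_0$ and $T\ge 0$, using the Tian--Zhang observation on the cross-term $\ell_y^G$ being bounded order zero on invariants) --- to conclude via Corollary~\ref{cor:independenceofT} that the index does not change as $T$ varies. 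You instead pass first to the closed manifold $\ti M$ via the equivariant version of Theorem~\ref{thm:indexequal}, then run a Witten deformation of the interior operator by an auxiliary $\Phi$ that agrees with $\mu$ near $K=\mu^{-1}(0)$ and is a large, nonvanishing multiple of $c$ near the gluing hypersurface $N$. This is a reasonable alternative: on the closed $\ti M$, homotopy invariance of the index for finite $t$ is automatic, so you avoid the need for an invertibility lemma entirely. What you pay for is the construction of $\Phi$ across $N$ and the localization argument on the closed glued manifold.

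Two points to flag. First, a sign issue: near $Z$ one has $\mu \sim \log(|x|)\,\pi^\ast c$ with $\log(|x|)\to -\infty$, so $\mu$ is a large \emph{negative} multiple of $c$ near $N$; your $\Phi$ near $N$ should be chosen with the matching sign (a large negative multiple of $c$), otherwise the linear interpolation from $\mu$ to $\Phi$ crosses $0$ and may create new zeros of the Kirwan vector field on the locus $\{\Phi=0\}$, spoiling the localization. With the correct sign, $\Phi$ is essentially a smoothing of $\mu$ across $N$, nonvanishing on the whole collar, and your argument goes through. Relatedly, your assertion that "the spin-c moment map is large near $N$" is not accurate: the construction of $\ti{D}$ homotopes away the $\log|x|$ term in the boundary family, so the genuine spin-c moment map of $\ti{S}$ on $\ti M$ is a \emph{bounded} smooth function near $N$; what is nonvanishing there is $c_M$ itself, which is all you actually use. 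Second, the citation to \cite{MaZhangTransEll} for the localization on the closed $\ti M$ is not quite on point --- their setup is a proper moment map on a noncompact manifold treated via APS-type boundary value problems, which is exactly the form the paper invokes on $M_\epsilon$. For a Witten deformation on a closed manifold by a general interpolated $\Phi$ (not globally a moment map), the appropriate references would be Tian--Zhang \cite{TianZhang}, or Paradan/Braverman-style transversally elliptic index localization; the contribution of $K$ is then identified with $Q(M_{\tn{red}},\omega_{\tn{red}})$ by the classical $[Q,R]=0$ theorem, as you say. You correctly identify the construction of $\Phi$ and the uniformity of the estimates across $N$ as the genuine hard step of your route; the paper's route sidesteps this by keeping the gluing hypersurface as a boundary where the deformation is confined to the boundary operator, at the cost of needing Lemma~\ref{lem:invertibility}.
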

\begin{remark}
For the case $Z = \emptyset$, this is the ``quantization commutes with reduction'' conjecture of Guillemin-Sternberg \cite{GuilleminSternbergConjecture}, first proved by Meinrenken \cite{MeinrenkenSymplecticSurgery}.	In case $0$ is not a regular value, one should modify the definition of the quantization of the reduced space using a shift desingularization for example, as in \cite{MeinrenkenSjamaar}.
\end{remark}

\subsection{The Kirwan vector field}
Fix an $\Ad_G$-invariant inner product $\pair{\cdot}{\cdot}_\g$ on $\g$.  Composing $c \colon Z \rightarrow \mf{z}^\ast$ with the isomorphism $\mf{z}^\ast \rightarrow \mf{z}$ induced by the inner product, we obtain a locally constant map
\[ \zeta \colon Z \rightarrow \mf{z}.\]
Since $\mf{z}$ is the center of $\g$, this map is trivially $G$-equivariant.  The induced vector field $\zeta_Z$ on $Z$ satisfies
\begin{equation} 
\label{eqn:normalizezeta}
\alpha_Z(\zeta_Z)=\pair{c}{\zeta}=|\zeta|_\g^2.
\end{equation}
Similarly we define
\[ \nu \colon M\backslash Z \rightarrow \g, \qquad \ol{\nu}\colon M \rightarrow \g \]
to be the compositions of $\mu$, $\ol{\mu}$ with the isomorphism $\g^\ast \rightarrow \g$.  The induced vector fields are all $G$-invariant and satisfy
\begin{equation} 
\label{eqn:nuzeta}
\nu_{M\backslash Z}=\ol{\nu}_{M\backslash Z}+\log(|x|)(\pi^\ast \zeta)_{C\backslash Z}.
\end{equation}
The vector field $\nu_{M \backslash Z}$ is the \emph{Kirwan vector field} (cf. \cite{Kirwan}) of the Hamiltonian $G$-space $(M\backslash Z,\omega,\mu)$.

\subsection{Proof of $[Q,R]=0$ for case of non-zero modular weights}\label{SS:qr}
In this section we prove quantization commutes with reduction in the following special case.
\vspace{0.3cm}

\noindent \textbf{Assumption}: The modular weight function $c$ is nowhere zero.  (In fact by \cite[Theorem 16]{guillemin2014convexity}, it already suffices to assume $c$ does not vanish on a single component of $Z$.)
\vspace{0.3cm}

\noindent Three immediate consequences of the assumption are:
\begin{enumerate}
\item All reduced spaces are symplectic (if non-singular).
\item The moment map $\mu \colon M \backslash Z \rightarrow \g^\ast$ is proper.
\item The vanishing set of the Kirwan vector field $\nu_{M\backslash Z}$ is a compact subset of $M \backslash Z$.
\end{enumerate}
The third statement follows from \eqref{eqn:nuzeta}, because under our assumption, $\zeta_Z$ is a non-vanishing vector field on $Z$, and the $\log(|x|)(\pi^\ast\zeta)_{C\backslash Z}$ term dominates as $x \rightarrow 0$ (the vector field $\ol{\nu}_{M\backslash Z}$ is bounded, since it extends smoothly to the vector field $\ol{\nu}_M$ on the compact manifold $M$).  These three consequences allow us to take advantage of results of Ma-Zhang \cite{MaZhangTransEll} on geometric quantization for proper moment maps on non-compact symplectic manifolds.  We will state the special case of their result that suffices for our purposes shortly.

\begin{lemma}
\label{lem:eprime}
We may choose $x$, $\pi$, $C$ to be $G$-equivariant and satisfy the conditions in Lemma \ref{lem:prodformomega}, and such that the resulting form $\beta=\omega-(dx/x)\wedge \pi^\ast \alpha_Z$ in \eqref{eqn:normalform1} satisfies the additional condition $\iota(\zeta_Z)(\iota^\ast \beta)=0$, where recall $\iota \colon Z \hookrightarrow M$.  Hence $\zeta_Z=|\zeta|_\g^2 e'$ in the notation of Lemma \ref{lem:prodformomega}.
\end{lemma}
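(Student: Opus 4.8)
The plan is to start from a $G$-equivariant choice of $x$, $\pi$, $C$ satisfying the conclusions of Lemma~\ref{lem:prodformomega} — this exists by running the proof of that lemma equivariantly, using a $G$-invariant bump function and the $G$-equivariant $b$-Moser theorem obtained from the non-equivariant one by averaging the Moser vector field over $G$ — and then to correct the defining function by a $G$-invariant positive factor to force the extra condition. Write $\omega=\frac{dx_0}{x_0}\wedge\pi_0^\ast\alpha_Z+\beta_0$ for such an initial choice and set $\gamma_0:=\iota(\zeta_Z)(\iota^\ast\beta_0)\in\Omega^1(Z)$; the task is to replace $x_0$ by a suitable $e^{\Phi}x_0$ so that this $1$-form becomes $0$. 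Note that rescaling $x_0\mapsto e^{\Phi}x_0$ (with $\Phi$ a $G$-invariant function supported near $Z$) keeps $x$ a $G$-invariant global defining function in the canonical equivalence class with $|x|=1$ off the collar, and replaces $\beta_0$ by $\beta=\beta_0-d\Phi\wedge\pi_0^\ast\alpha_Z$, hence $\iota^\ast\beta=\iota^\ast\beta_0-d(\Phi|_Z)\wedge\alpha_Z$.

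The key observation is that $\gamma_0$ is exact, with a preferred $G$-invariant primitive coming from the moment map. Pulling the identity $d\pair{\ol\mu}{X}=-\iota(X_M)\beta_0$ of Definition~\ref{D:hamiltonian} back along $\iota\colon Z\hookrightarrow M$, and using that $X_M$ is tangent to $Z$ so that $\iota^\ast(\iota(X_M)\beta_0)=\iota(X_Z)(\iota^\ast\beta_0)$, shows that $\iota^\ast\ol\mu$ is a moment map for the $G$-action on $(Z,\iota^\ast\beta_0)$: $d\pair{\iota^\ast\ol\mu}{X}=-\iota(X_Z)(\iota^\ast\beta_0)$ for all $X\in\g$. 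Applying this with $X=\zeta$, read componentwise on $Z=\sqcup_j Z_j$ where $\zeta\equiv\zeta_j\in\mf z$ is constant, gives $\gamma_0=-d\rho$ with $\rho:=\pair{\iota^\ast\ol\mu}{\zeta}\in C^\infty(Z)$. Since $\zeta$ is central, $\rho$ is $G$-invariant (by equivariance of $\ol\mu$, $X_Z\pair{\iota^\ast\ol\mu}{\zeta_j}$ is a pairing of $\iota^\ast\ol\mu$ with $[X,\zeta_j]=0$); and because the flow of $\zeta_Z$ lies in $G$, this also gives $\zeta_Z\rho=0$.

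Now take $\Phi$ to be a $G$-invariant extension to $M$, supported near $Z$, of $\pi_0^\ast(\rho/|\zeta|_\g^2)$ on the collar, which makes sense since $|\zeta|_\g^2$ is a nowhere-zero locally constant function on $Z$ by the standing Assumption; and set $x:=e^{\Phi}x_0$. Then $\Phi|_Z=\rho/|\zeta|_\g^2$, so $\iota^\ast\beta=\iota^\ast\beta_0-d(\rho/|\zeta|_\g^2)\wedge\alpha_Z$, and contracting with $\zeta_Z$, using $\zeta_Z(\rho/|\zeta|_\g^2)=0$, $\alpha_Z(\zeta_Z)=|\zeta|_\g^2$ (equation~\eqref{eqn:normalizezeta}), and the local constancy of $|\zeta|_\g^2$, we obtain
\[ \iota(\zeta_Z)(\iota^\ast\beta)=\gamma_0+|\zeta|_\g^2\,d(\rho/|\zeta|_\g^2)=\gamma_0+d\rho=0.\]
Finally, re-run the equivariant argument of Lemma~\ref{lem:prodformomega} for this new $x$ to produce $G$-equivariant $\pi$, $C$ putting $\beta$ into product form near $Z$: the normal-form diffeomorphism produced there is the identity on $Z$ (its generating vector field vanishes on $Z$), so it fixes $\alpha_Z$ and $\iota^\ast\beta$, and the new defining function differs from $x$ by a smooth positive factor near $Z$, so Definition~\ref{D:hamiltonian} still holds. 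Hence $\iota(\zeta_Z)(\iota^\ast\beta)=0$ persists. The concluding assertion is then immediate: $\iota^\ast\beta$ restricts to the leafwise symplectic form on $F=\ker\alpha_Z$, so $\ker(\iota^\ast\beta)$ is exactly the line $\bR e'$; since $\iota(\zeta_Z)(\iota^\ast\beta)=0$ and $\alpha_Z(\zeta_Z)=|\zeta|_\g^2=|\zeta|_\g^2\,\alpha_Z(e')$, we conclude $\zeta_Z=|\zeta|_\g^2\,e'$.

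The substantive step is the second one: recognising the obstruction $\iota(\zeta_Z)(\iota^\ast\beta_0)$ as the exact form $-d\pair{\iota^\ast\ol\mu}{\zeta}$, which makes it removable by a logarithmic rescaling of the defining function. The remaining point that needs care is purely organizational — arranging the equivariant $b$-Moser steps so that the normal-form diffeomorphisms are the identity on $Z$ (hence do not disturb $\alpha_Z$, $\iota^\ast\beta$, or the validity of Definition~\ref{D:hamiltonian}), and checking that the rescaling $x_0\mapsto e^{\Phi}x_0$ stays in the canonical equivalence class and keeps $|x|=1$ off the collar.
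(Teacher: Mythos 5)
Your proposal is correct and follows the same strategy as the paper: modify the global defining function by the $\zeta$-component of the moment map so that the moment-map identity $d\langle\iota^\ast\ol\mu,\zeta\rangle=-\iota(\zeta_Z)(\iota^\ast\beta_0)$ cancels the obstruction $\iota(\zeta_Z)(\iota^\ast\beta_0)$, and then observe that the $b$-Moser diffeomorphism from Lemma~\ref{lem:prodformomega} fixes $Z$ and hence does not disturb $\iota^\ast\beta$. In fact your version is slightly more careful: the rescaling factor needs the normalization $\rho/|\zeta|_\g^2$ (as you have it) rather than $\rho$ alone, since $\iota(\zeta_Z)(d\rho\wedge\alpha_Z)=-|\zeta|_\g^2\,d\rho$; the paper's displayed computation appears to drop this $|\zeta|_\g^2$, but this is a harmless slip and your proposal gets it right.
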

\begin{proof}
Since $\zeta$ is central, $x\exp(-\pair{\ol{\mu}}{\zeta})$ is a $G$-invariant global defining function for $Z$ in the same equivalence class of global defining functions as $x$.  By re-scaling or using a bump function, we may find a $G$-invariant global defining function $x'$ that equals $x\exp(-\pair{\ol{\mu}}{\zeta})$ on a small neighborhood, say $C''$, of $Z$, and equals $x$ outside of $C$. Then on $C''$ we have:
\[ \omega=\frac{dx}{x}\wedge \pi^\ast \alpha_Z+\beta=\frac{dx'}{x'}\wedge \pi^\ast \alpha_Z+\beta', \qquad \beta'=
-d\pair{\ol{\mu}}{\zeta}\wedge \pi^\ast \alpha_Z+\beta.\]
Notice that
\[ \iota(\zeta_Z)(\iota^\ast \beta')=\iota^\ast d\pair{\ol{\mu}}{\zeta}+\iota(\zeta_Z)(\iota^\ast \beta)=0,\]
by the moment map property for $\ol{\mu}$.  This shows it is possible to modify the global defining function on a small neighborhood of $Z$ in order to obtain a $\beta'$ satisfying the additional property in the statement of the lemma.  So we may assume from the beginning that $x$ is chosen such that $\beta=\omega-(dx/x)\wedge \pi^\ast \alpha_Z$ has this property.  

The construction in the proof of Lemma \ref{lem:prodformomega} does not modify the pullback of $\beta$ to $Z$, since the diffeomorphism $\varphi$ fixes $Z$.  Hence after carrying out the modification of $x$, $C$, $\pi$ as in the proof, the new form $\ti{\beta}=\omega-(d\ti{x}/\ti{x})\wedge \ti{\pi}^\ast \alpha_Z$ will still have the desired property.  

The vector field $e'$ on $Z$ was defined as the unique vector field satisfying $\alpha_Z(e')=1$, $\iota^\ast\beta(e')=0$, so $\zeta_Z=|\zeta|_\g^2 e'$ follows from \eqref{eqn:normalizezeta}.
\end{proof}

Using the lemma, the operators $A_y$, $y=\pm \epsilon$ defining the APS-like boundary condition (see \eqref{eqn:Ax}) can be written as
\[ A_y=c(e)^{-1}B_y,\]
where $B_y$ is the operator
\begin{equation}
\label{eqn:defBeps} 
B_y=D_Z-\i k\log(|y|)c(\zeta_Z),
\end{equation}
and $k=2\pi|\zeta|^{-2}_\g$ is an (unimportant) locally constant positive normalization factor. 

\begin{definition}
Define a 2-parameter family $0\ne y \in (-\tfrac{1}{2},\tfrac{1}{2})$, $T\in \bR$ of operators on $Z_y$ by
\[ A_{y,T}=c(e)^{-1}B_{y,T}, \quad B_{y,T}=B_y-\i T c(\nu_{Z_y})=D_Z-\i(k+T)\log(|y|)c(\zeta_Z)-\i T c(\ol{\nu}_{Z_y}).\]
Let $A_{y,T}^G$, $B_{y,T}^G$ denote the restrictions to the subspace of $G$-invariant spinors.
\end{definition}

Let $D^+_{APS,\epsilon,T}$ be the Hilbert space operator defined as in Definition \ref{def:DAPS}, but using APS boundary conditions with the operators $A_{\pm \epsilon,T}$ in place of $A_{\pm \epsilon}$ (this is legitimate, because $A_{y,T}$ is essentially self-adjoint and has the same symbol as $A_{y}$, hence can also serve as an adapted boundary operator, cf. \cite[Section 2.2]{BarBallmannGuide}).  

The following is a special case of results of Ma-Zhang on geometric quantization for proper moment maps on non-compact symplectic manifolds.
\begin{theorem}[\cite{MaZhangTransEll}]
\label{thm:MaZhang}
Fix $\epsilon>0$ such that $M_\epsilon$ contains the vanishing locus of the Kirwan vector field $\nu_{M \backslash Z}$ in its interior.  There is a constant $T_0$ such that for $T>T_0$, 
\[ \ind_G(D^+_{APS,\epsilon,T})^G=Q(M_{\tn{red}},\omega_{\tn{red}}).\]
(If $0$ is a regular value, $(M_{\tn{red}},\omega_{\tn{red}})$ is a symplectic orbifold.  Otherwise the right-hand-side should be defined by a shift desingularization, as in \cite{MeinrenkenSjamaar}.)
\end{theorem}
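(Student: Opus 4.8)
The plan is to realize $\ind_G(D^+_{APS,\epsilon,T})^G$ as the index of a deformed Dirac operator of Ma--Zhang type on the complete \emph{non-compact} symplectic manifold $(M\backslash Z,\omega|_{M\backslash Z})$, and then to invoke the ``quantization commutes with reduction'' theorem for proper moment maps of \cite{MaZhangTransEll}. Via the anchor, $g|_{M\backslash Z}$ is a complete Riemannian metric, $\omega|_{M\backslash Z}$ is an honest symplectic form, $(L,\nabla^L)$ equipped with the Kostant lift of the $G$-action is an honest $G$-equivariant prequantum line bundle, $D$ is an honest $\Spin^c$ Dirac operator for the induced almost-complex structure, and $\mu\colon M\backslash Z\to\g^\ast$ is an honest moment map which is \emph{proper} (by the \textbf{Assumption}). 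Form the deformed operator $D_T:=D-\i T\, c(\nu_{M\backslash Z})$, where $\nu_{M\backslash Z}$ is the Kirwan vector field; note that $\i c(\nu_{M\backslash Z})$ is self-adjoint with square $|\nu_{M\backslash Z}|^2$, and that the zero set of $\nu_{M\backslash Z}$ is a \emph{compact} subset of $M\backslash Z$ (again by the \textbf{Assumption}, via \eqref{eqn:nuzeta}). These are precisely the hypotheses of \cite{MaZhangTransEll}: for $T\gg0$ the operator $D_T^G$ obtained by restricting $D_T$ to $G$-invariant sections is Fredholm, and $\ind(D_T^G)=Q(M_{\tn{red}},\omega_{\tn{red}})$, the right-hand side being defined by the shift desingularization of \cite{MeinrenkenSjamaar} when $0$ is not a regular value.

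It remains to prove $\ind(D_T^G)=\ind\bigl((D^+_{APS,\epsilon,T})^G\bigr)=\ind_G(D^+_{APS,\epsilon,T})^G$ by an excision argument along the $b$-ends. Using the product normal form near $Z$ and the logarithmic coordinate $t=\log|x|$, one checks — this is the computation underlying \eqref{eqn:defBeps} and the definition of $A_{y,T}$ above, now performed for all $0\ne x\in(-\tfrac12,\tfrac12)$ — that on the collar $C'$ one has $D_T=c(e)(\partial_t+A_{x,T})$ with $A_{x,T}=c(e)^{-1}B_{x,T}$. Write $M\backslash Z=M_{x>0}\sqcup M_{x<0}$; cutting $M_{x>0}$ along $Z_\epsilon$ produces $M_{\ge\epsilon}$ together with a half-infinite cylinder $W_+\simeq Z\times(-\infty,\log\epsilon]$, and similarly $M_{x<0}$ produces $M_{\le-\epsilon}$ and $W_-$. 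Impose on $M_{\ge\epsilon}$ and $M_{\le-\epsilon}$ exactly the APS-like conditions of Definition~\ref{def:DAPS} (the spectral subspaces for $A_{\epsilon,T}$, $A_{-\epsilon,T}$), and on $W_\pm$ their $L^2$-orthogonal complements. Then the Splitting Theorem for elliptic boundary value problems \cite[Theorem 8.17]{BarBallmann}, applied exactly as in the proof of Theorem~\ref{thm:indexequal}, gives
\[ \ind(D_T)=\ind(D^+_{APS,\epsilon,T})+\ind(D_T|_{W_+})+\ind(D_T|_{W_-}),\]
and likewise on $G$-invariant subspaces.

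The crux — the step I expect to require genuine work — is the vanishing of the two half-cylinder terms, i.e.\ that $D_T^+|_{W_\pm}$ with the complementary APS-like condition at $t=\log\epsilon$ is invertible once $T\gg0$. On $W_\pm$ the cross-sectional operator $A_{x,T}$ carries the potential term $-\i(k+T)t\, c(\zeta_Z)$ with $|t|\to\infty$ toward the end, and by \eqref{eqn:normalizezeta} the vector field $\zeta_Z$ is nowhere vanishing, so $(\i c(\zeta_Z))^2=|\zeta_Z|^2$ is bounded below by a positive constant. Hence the spectrum of $A_{x,T}$ separates into one branch tending to $+\infty$ and one tending to $-\infty$ as $t\to-\infty$, with no eigenvalue crossing $0$ for generic $\epsilon$. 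The standard analysis of the model operator $c(e)(\partial_t+A_{x,T})$ on a half-line then shows that its $L^2$-solutions near the $t\to-\infty$ end span the \emph{negative} spectral subspace of $A_{\pm\epsilon,T}$ at the finite boundary, so they meet the complementary boundary condition — the \emph{positive} spectral subspace, by Definition~\ref{def:DAPS} — only in $0$; the adjoint boundary value problem is treated symmetrically. The care needed here is in bookkeeping the dual- versus ordinary-APS conditions, the sign of $t$ at the two ends of $C'$, and the fact that the $b$-metric turns each finite collar into a genuine cylindrical end so that this classical analysis applies. Granting this, $\ind(D_T|_{W_\pm})=0$, whence $\ind(D_T^G)=\ind\bigl((D^+_{APS,\epsilon,T})^G\bigr)$; combined with the first paragraph this proves the theorem. (Alternatively, one may quote a version of the Ma--Zhang localization theorem stated directly on the truncation $M_\epsilon$ with APS-type conditions, which absorbs the cylinder analysis into their framework.)
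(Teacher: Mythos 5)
The paper does not prove this statement at all: Theorem~\ref{thm:MaZhang} is labelled with the citation \cite{MaZhangTransEll} and is quoted as a special case of Ma--Zhang's results, with the only commentary being the subsequent remarks (explaining the inessential difference of the interior $0^{\text{th}}$-order deformation, and the $\lambda$-dependent refinement). In particular, the paper's point of view --- which is exactly your closing parenthetical --- is that Ma--Zhang already formulate their localization theorem directly on a truncation of the non-compact manifold, with a $T$-deformed APS-type boundary condition, so there is nothing to prove beyond translating notation and invoking Remark~4.7 about the removable interior deformation.

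Your route is genuinely different: you instead quote the Ma--Zhang theorem in its ``final'' form on the complete non-compact $(M\backslash Z,\omega)$ with a proper moment map, and then try to recover the compact APS index by a splitting/excision argument across the two half-infinite cylindrical ends $W_\pm$. This is a reasonable strategy and does correctly assemble the needed hypotheses (completeness of $g|_{M\backslash Z}$, properness of $\mu$, compactness of $\{\nu_{M\backslash Z}=0\}$, all from the \textbf{Assumption}). What it buys is a conceptual explanation of why truncating at finite $\epsilon$ does not lose information; what it costs is two nontrivial steps that you flag but do not carry out. First, the Splitting Theorem the paper actually cites (\cite[Theorem~8.17]{BarBallmann}) is stated for a \emph{closed} base, whereas you need a version for a complete non-compact manifold cut along a compact hypersurface, and applied not to $D_T$ itself but to the transversally elliptic $G$-invariant reduction $D_T^G$; this requires coercivity at infinity on $W_\pm$ and a check that the Bär--Ballmann framework descends to the $G$-invariant subcomplex, neither of which is automatic from what the paper has set up. Second, the claim $\ind\bigl(D_T^G|_{W_\pm}\bigr)=0$ --- which you correctly identify as the crux --- is asserted via a heuristic about $L^2$-solutions of the half-line model spanning the negative spectral subspace at the finite boundary; but the cross-sectional operator $A_{x,T}$ is genuinely $t$-dependent (the potential grows like $|t|$), so ``spanning the negative spectral subspace of $A_{\pm\epsilon,T}$'' does not follow from the constant-coefficient model and needs a quantitative argument (e.g.\ a positivity estimate of the type used in Lemma~\ref{lem:invertibility}, pushed out to the full half-cylinder). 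So your plan is coherent and compatible with the paper, but as written it substitutes two unproven analytic lemmas for a single literature citation; the paper avoids both by quoting Ma--Zhang's result in the truncated-with-APS form directly.
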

\begin{remark}
It is in fact only necessary that $M_\epsilon$ contain the component $\mu^{-1}(0)$ of the vanishing locus of $\nu_{M \backslash Z}$, but we have chosen to state the result in this way because a slight variant holds for the multiplicity of any irreducible representation $\lambda \in \Lambda_+$ of $G$ in $\ind_G(D^+_{APS,\epsilon,T})$, with $M_{\tn{red}}$ replaced with the reduced space at $\lambda$, and the constant $T_0=T_0(\lambda)$ depending on $\lambda$.
\end{remark}
\begin{remark}
In their paper \cite{MaZhangTransEll}, Ma and Zhang in addition deform the differential operator on $M_\epsilon$ by a $0^{th}$-order term $\i T c(\nu_{M_\epsilon})$.  Because $M_\epsilon$ is a compact subset of $M \backslash Z$, this is a continuous deformation by a family of bounded operators and does not affect the index.  (On the other hand deforming the boundary conditions usually does change the index.)
\end{remark}
\begin{remark}
Theorem \ref{thm:MaZhang} is rather more sophisticated than necessary, because our manifold $M \backslash Z$ has relatively simple geometry at infinity.  But it allows us to give a rather quick proof, and sets our geometric quantization for $b$-symplectic manifolds into a broader context.  Another treatment of a version of Ma-Zhang's argument can be found in \cite{LSWittenDef}.
\end{remark}

\begin{lemma}
\label{lem:invertibility}
There is a constant $\epsilon_0>0$ such that for $0<|y|<\epsilon_0$, the operator $B_{y,T}^G$ is invertible for all $T\ge 0$.
\end{lemma}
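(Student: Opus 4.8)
The plan is to show that $B_{y,T}^G$ is invertible by obtaining a uniform lower bound of the form $\|B_{y,T}^G s\| \ge c\|s\|$ (and the same for the adjoint, which is of the same type), valid for all $s$ in the $G$-invariant spinors, all $T\ge 0$, and all $y$ with $0<|y|<\epsilon_0$. Since $B_{y,T}^G$ is essentially self-adjoint and elliptic on the closed manifold $Z$, such a bound forces $0\notin\tn{spec}(B_{y,T}^G)$. First I would write out, using Lemma \ref{lem:eprime} and \eqref{eqn:defBeps}, the explicit form
\[ B_{y,T}^G = D_Z^G - \i(k+T)\log(|y|)c(\zeta_Z) - \i T\, c(\ol{\nu}_{Z_y}), \]
where $D_Z^G$ is the restriction of $D_Z$ to $G$-invariant sections. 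The key structural point is that $c(\zeta_Z)$ is a pointwise skew-adjoint bundle endomorphism whose square is $-|\zeta_Z|^2 = -|\zeta|_\g^4$, which is bounded \emph{away from zero} on $Z$ precisely because of the nonvanishing-modular-weight Assumption; this is the place where the hypothesis of this subsection enters decisively.

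The main computation is to expand $\|B_{y,T}^G s\|^2$. Because $c(\zeta_Z)$ is skew-adjoint, cross terms between $D_Z^G$ and the zeroth-order pieces produce commutator terms $[D_Z, c(\zeta_Z)]$, $[D_Z, c(\ol\nu)]$ which are bounded zeroth-order operators (Clifford multiplication by covariant derivatives of the vector fields), so they contribute $O((k+T)|\log|y||)\|s\|^2$ and $O(T)\|s\|^2$ up to a factor that can be absorbed into the dominant term. The dominant term is
\[ \big\|\,\big((k+T)\log(|y|)c(\zeta_Z) + T c(\ol\nu_{Z_y})\big)s\,\big\|^2, \]
and here I would use that $c(\zeta_Z)$ commutes with $c(\ol\nu_{Z_y})$ up to bounded terms is \emph{not} automatic, so instead I would estimate directly: since $|{-}(k+T)\log(|y|)| = (k+T)|\log|y||$ can be made larger than any fixed multiple of $(1+T)\sup_Z|\ol\nu|$ by shrinking $|y|$ (note $|\log|y||\to\infty$ as $y\to 0$, uniformly over the compact $Z$), the vector-field term $Tc(\ol\nu_{Z_y})$ is a small perturbation of the $\zeta_Z$-term. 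Concretely, choose $\epsilon_0$ small enough that $k|\log|y||\,|\zeta|_\g^2 \ge 2\sup_Z|\ol\nu| + 1$ for $|y|<\epsilon_0$; then for every $T\ge 0$ one has $(k+T)|\log|y||\,|\zeta_Z| - T|\ol\nu_{Z_y}| \ge \tfrac12(k+T)|\log|y||\,|\zeta|_\g^2 \ge \tfrac12$ pointwise, so the zeroth-order operator $Q_{y,T} := (k+T)\log(|y|)c(\zeta_Z) + Tc(\ol\nu_{Z_y})$ satisfies $\|Q_{y,T}s\|\ge \tfrac12\|s\|$ after taking $\epsilon_0$ slightly smaller to dominate the remaining $D_Z$-cross terms as well.

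The cleanest way to package the cross terms: write $B_{y,T}^G = D_Z^G + Q_{y,T}$ with $Q_{y,T}$ skew-adjoint and bounded below by $\tfrac12(k+T)|\log|y||\,|\zeta|_\g^2$ in operator norm on each fiber, and compute
\[ \|B_{y,T}^G s\|^2 = \|D_Z^G s\|^2 + \|Q_{y,T}s\|^2 + 2\,\tn{Re}\langle D_Z^G s, Q_{y,T}s\rangle, \]
where the cross term equals $\langle [D_Z, \tfrac{1}{\i}Q_{y,T}]s, \tfrac{1}{\i}s\rangle$ type expression (using skew-adjointness of $Q$ and self-adjointness of $D_Z$), hence is bounded by $C(1+T)|\log|y||\,\|s\|^2$ with $C$ depending only on $\sup_Z|\nabla\zeta_Z|$, $\sup_Z|\nabla\ol\nu|$ and the geometry of $Z$. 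Comparing with $\|Q_{y,T}s\|^2 \ge \tfrac14(k+T)^2(\log|y|)^2|\zeta|_\g^4\|s\|^2$, and noting $(k+T)^2(\log|y|)^2$ dominates $(1+T)|\log|y||$ once $|\log|y||$ is large (uniformly in $T\ge 0$ since $k>0$), we get $\|B_{y,T}^G s\|^2 \ge \tfrac18(k+T)^2(\log|y|)^2|\zeta|_\g^4\|s\|^2 > 0$ for $0<|y|<\epsilon_0$. The same argument applies verbatim to $(B_{y,T}^G)^\ast = B_{y,T}^G$ (it is self-adjoint), so $B_{y,T}^G$ is invertible. The main obstacle, and the only subtle point, is controlling the cross term between $D_Z$ and $Q_{y,T}$ uniformly in $T$; this works because that term grows only linearly in $T$ while $\|Q_{y,T}s\|^2$ grows quadratically, and the coefficient $k>0$ guarantees $Q_{y,T}$ does not degenerate even at $T=0$.
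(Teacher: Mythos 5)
Your overall strategy (dominate the zeroth-order part, control the cross terms with $D_Z$) is the same as the paper's, but there is a genuine gap at the point where you control the cross terms, and it is exactly the point where the paper has to invoke a nontrivial input.

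You assert that ``cross terms between $D_Z^G$ and the zeroth-order pieces produce commutator terms $[D_Z, c(\zeta_Z)]$, $[D_Z, c(\ol\nu)]$ which are bounded zeroth-order operators (Clifford multiplication by covariant derivatives of the vector fields).'' This is incorrect. First, a sign/adjointness slip: $c(v)$ is skew-Hermitian, so $-\i c(v)$ is \emph{self}-adjoint, and hence $Q_{y,T}=-\i(k+T)\log(|y|)c(\zeta_Z)-\i T c(\ol\nu_{Z_y})$ is self-adjoint, not skew-adjoint; consequently $2\,\mathrm{Re}\langle D_Z s,Q_{y,T}s\rangle=\langle \{D_Z,Q_{y,T}\}s,s\rangle$ is an \emph{anticommutator}, not a commutator. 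More importantly, neither $\{D_Z,c(V)\}$ nor $[D_Z,c(V)]$ is a bounded zeroth-order operator for a generic vector field $V$: a direct computation gives $\{D_Z,c(V)\}=-2\nabla_V+\sum_i c(e_i)c(\nabla_{e_i}V)$ (and $[D_Z,c(V)]=-2\nabla_V-2c(V)D_Z+\sum_i c(e_i)c(\nabla_{e_i}V)$), so both contain the first-order term $-2\nabla_V$. Your claimed bound $|\langle\text{cross}\rangle|\le C(1+T)|\log|y||\,\|s\|^2$ is therefore not justified.

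This is precisely where the restriction to $G$-invariant sections has to do real work, and the paper cites the observation of Tian--Zhang (\cite[Theorem 1.6, Remark 1.9]{TianZhang}) for it: on $G$-invariant spinors, $\nabla_{X_M}=\L_{X_M}+(\text{zeroth order})$ and $\L_{X_M}$ vanishes, so the first-order parts $\nabla_{\zeta_Z}$, $\nabla_{\ol\nu}$ in the cross term collapse to bundle endomorphisms. In your proposal, the superscript $G$ appears purely notationally; you never use $G$-invariance in an estimate, so you cannot recover this cancellation. Once this is repaired — i.e., once one knows the cross term restricted to invariants is a bounded endomorphism $\ell_y^G$, uniformly in $y$ — your domination argument (the $(k+T)^2(\log|y|)^2|\zeta_Z|^2$ term beats the $O((k+T)|\log|y||)$ cross term for $|y|$ small, uniformly in $T\ge 0$ because $k>0$) matches the paper's and is fine. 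Without it, the proof as written does not close.
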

\begin{proof}
The square $B_{y,T}^2$ is given by a formula of the form
\[ D_Z^2+T^2|\ol{\nu}_{Z_y}|^2+\log(|y|)\Big(T\ell_y+(k+T)^2\log(|y|)|\zeta_Z|^2\Big) \]
where $\ell_y$ is a first-order differential operator containing the cross-terms.  An observation of Tian and Zhang \cite[Theorem 1.6, Remark 1.9]{TianZhang}, is that the restriction $\ell_y^G$ to the $G$-invariant spinors is a $0^{th}$ order operator (a bundle endomorphism), and in our case it is also bounded uniformly in $y$ (it extends smoothly to $y=0$ for example, the explicit $\log(|y|)$'s in the expression above being the only singularities).  Note also that $\zeta_Z$ is non-vanishing, by our assumption that the modular weights are non-zero.  Moreover $k+T>0$ as $k>0$ and $T \ge 0$. It follows that when $\log(|y|)$ is sufficiently negative (i.e. for $|y|$ sufficiently small), the second term $(k+T)^2\log(|y|)|\zeta_Z|^2$ in the brackets dominates, and the operator $(B_{y,T}^2)^G$ is strictly positive, hence $B_{y,T}^G$ is invertible.
\end{proof}

\begin{corollary}
\label{cor:independenceofT}
For $0<\epsilon<\epsilon_0$, $\ind(D^+_{APS,\epsilon,T})^G=\ind(D^+_{APS,\epsilon})^G$ for all $T\ge 0$.
\end{corollary}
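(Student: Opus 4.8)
The plan is to interpolate between the boundary operators $A_{\pm\epsilon}$ and $A_{\pm\epsilon,T}$ by a path and invoke the homotopy invariance of the Fredholm index for APS-like boundary value problems, using Lemma \ref{lem:invertibility} to guarantee that the boundary conditions vary continuously along the path. Concretely, for $\epsilon<\epsilon_0$ consider the family $s\mapsto A_{\pm\epsilon,sT}$, $s\in[0,1]$, of essentially self-adjoint first-order elliptic operators on $Z_{\pm\epsilon}$; all of these have the same principal symbol $c(e)^{-1}c(g^\sharp(\cdot))$ as $A_{\pm\epsilon}$, so each is an admissible adapted boundary operator in the sense of \cite[Section 2.2]{BarBallmannGuide}, and each defines an APS-like boundary condition for the \emph{fixed} interior operator $D^+$ on $M_\epsilon$ exactly as in Definition \ref{def:DAPS}. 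The corresponding family of Hilbert space operators $D^+_{APS,\epsilon,sT}$ is then a continuous family of Fredholm operators, provided the associated family of boundary projections (onto the nonpositive, resp. negative, spectral subspaces) is continuous in $s$; granting that, $\ind(D^+_{APS,\epsilon,sT})$ is constant in $s$, and restricting to $G$-invariant spinors gives the claim.

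The key point — and the main obstacle — is the continuity of the spectral boundary projections along the path. The naive projection $\chi_{(-\infty,0]}(A_{\pm\epsilon,sT})$ (or $\chi_{(-\infty,0)}$) can jump precisely when an eigenvalue crosses $0$, i.e. exactly where $A_{\pm\epsilon,sT}$ fails to be invertible. Here is where Lemma \ref{lem:invertibility} is essential: for $0<\epsilon<\epsilon_0$ it tells us that $B^G_{\pm\epsilon,T'}$, hence $A^G_{\pm\epsilon,T'}=c(e)^{-1}B^G_{\pm\epsilon,T'}$, is invertible for \emph{every} $T'\ge 0$ — in particular for every $T'=sT$ along the path. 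Thus, restricted to $G$-invariant spinors, $0$ is never in the spectrum, the spectral projection $\chi_{(-\infty,0)}(A^G_{\pm\epsilon,sT})=\chi_{(-\infty,0]}(A^G_{\pm\epsilon,sT})$ depends continuously (indeed real-analytically, by analytic perturbation theory for the family $B_{\pm\epsilon,sT}$ which is polynomial in $s$) on $s$, and the two variants of the APS boundary condition coincide. On the orthogonal complement — the non-invariant isotypical components — the operators $A_{\pm\epsilon,sT}$ need not be invertible, but the $G$-equivariant index $\ind_G(D^+_{APS,\epsilon,sT})$ decomposes as a sum over irreducibles, and only its $G$-invariant part is being compared; so it suffices to run the continuity argument componentwise, and for the trivial component Lemma \ref{lem:invertibility} supplies exactly what is needed. (Alternatively, one observes that the whole $0$-eigenspace contributes in a $G$-balanced way and the subtlety only affects non-trivial isotypical pieces; but the clean route is to work with $D^+_{APS,\epsilon,sT}$ acting on the fixed Hilbert space $L^2(M_\epsilon,S^\pm)$, take the $G$-invariant part, and note that on $L^2(M_\epsilon,S^\pm)^G$ the entire family is a continuous family of Fredholm operators with continuously varying domains.)

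Having established continuity of the $G$-invariant family $s\mapsto (D^+_{APS,\epsilon,sT})^G$, homotopy invariance of the index of a norm-continuous family of Fredholm operators yields
\[ \ind(D^+_{APS,\epsilon,T})^G=\ind(D^+_{APS,\epsilon,0})^G=\ind(D^+_{APS,\epsilon})^G, \]
the last equality because $A_{\pm\epsilon,0}=A_{\pm\epsilon}$ by definition. One small bookkeeping item: to speak of the index on the fixed Hilbert space one should check that the domains $\dom((D^+_{APS,\epsilon,sT})^G)$ — described as the $G$-invariant part of the Sobolev space $H^1(M_\epsilon,S^+)$ cut out by the $s$-dependent spectral boundary condition — vary continuously; this again follows from continuity of the boundary projections together with the standard identification of the domain with an $H^1$-subspace via the trace map, as recalled after Definition \ref{def:DAPS}. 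This completes the argument.
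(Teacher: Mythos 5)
Your argument is correct and takes essentially the same approach as the paper: deform $T$, observe that the APS-like index can only jump when an eigenvalue of the adapted boundary operator crosses zero, and use Lemma~\ref{lem:invertibility} to rule this out on the $G$-invariant isotype. The paper simply cites this spectral-flow principle from \cite[Section 8.2]{BarBallmann} and related references, whereas you explicate the underlying continuity of the spectral boundary projections and domains.
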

\begin{proof}
It is known that for a suitable smooth 1-parameter family $\A_T:=A_{\epsilon,T}$ specifying APS-like boundary conditions, the index only varies at values of $T$ such that $0 \in \tn{Spec}(\A_T)$, cf. \cite[Section 8.2]{BarBallmann}, \cite[Proposition 1.1]{MaZhangTransEll}, \cite[Corollary 5.3]{LSWittenDef} for related arguments.  The result is thus a consequence of Lemma \ref{lem:invertibility}.
\end{proof}

\noindent Corollary \ref{cor:independenceofT} and Theorem \ref{thm:MaZhang} now imply Conjecture \ref{C:qr}, in the special case of non-zero modular weights:
\begin{corollary}
$Q_G(M,Z,\omega)^G=Q(M_{\tn{red}},\omega_{\tn{red}})$.
\end{corollary}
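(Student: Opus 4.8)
The plan is to chain together the two results just proved, paying attention only to the order in which the parameters $\epsilon$ and $T$ are selected. I would begin by recalling that, by definition \eqref{eqn:QG}, $Q_G(M,Z,\omega)^G=\ind(D^+_{APS,\epsilon})^G$, and that — as noted after that definition via Theorem \ref{thm:indexequal} — this integer is independent of the choice of sufficiently small $\epsilon$. So it suffices to evaluate it for one convenient value of $\epsilon$.

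Next I would fix $\epsilon>0$ small enough to satisfy two conditions at once: (i) $\epsilon<\epsilon_0$, the constant from Lemma \ref{lem:invertibility}, so that Corollary \ref{cor:independenceofT} applies; and (ii) $M_\epsilon$ contains the vanishing locus of the Kirwan vector field $\nu_{M\backslash Z}$ in its interior. Condition (ii) is attainable because, under the non-zero modular weights assumption, that vanishing locus is a compact subset of $M\backslash Z$ (consequence (3) of the assumption above), while the $M_\epsilon$ exhaust $M\backslash Z$ as $\epsilon\to 0^+$; and both (i) and (ii) hold for all sufficiently small $\epsilon$, so such a choice exists.

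With such an $\epsilon$ fixed, Theorem \ref{thm:MaZhang} provides a constant $T_0=T_0(\epsilon)$ with $\ind_G(D^+_{APS,\epsilon,T})^G=Q(M_{\tn{red}},\omega_{\tn{red}})$ for all $T>T_0$. Choosing any $T>\max(0,T_0)$ and applying Corollary \ref{cor:independenceofT} — legitimate since $\epsilon<\epsilon_0$ — which gives $\ind(D^+_{APS,\epsilon,T})^G=\ind(D^+_{APS,\epsilon})^G$ for every $T\ge 0$, I would conclude
\[ Q_G(M,Z,\omega)^G=\ind(D^+_{APS,\epsilon})^G=\ind(D^+_{APS,\epsilon,T})^G=Q(M_{\tn{red}},\omega_{\tn{red}}).\]

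I do not expect a genuine obstacle at this stage: all of the analytic work has already been done in Theorem \ref{thm:MaZhang} (Ma--Zhang's quantization-commutes-with-reduction for proper moment maps) and in the uniform invertibility estimate of Lemma \ref{lem:invertibility} (via the Tian--Zhang observation on $G$-invariant spinors). The only point deserving care is the quantifier order: one must pick $\epsilon$ first, meeting both (i) and (ii), and only then pick $T$ in terms of the resulting $T_0(\epsilon)$; reversing this would be circular. One should also note that since the left-hand side is known to be $\epsilon$-independent, the particular admissible $\epsilon$ used in the argument is immaterial.
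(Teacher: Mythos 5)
Your proof is correct and follows the same route as the paper, which simply invokes Corollary \ref{cor:independenceofT} and Theorem \ref{thm:MaZhang} without elaboration. Your careful spelling out of the quantifier order (first choose $\epsilon$ small enough for both $\epsilon<\epsilon_0$ and for $M_\epsilon$ to contain the compact vanishing locus of the Kirwan vector field, then choose $T>\max(0,T_0(\epsilon))$) makes explicit the logical bookkeeping that the paper leaves implicit.
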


In particular this proves that $Q_G(M,Z,\omega)$ coincides with the formal geometric quantization defined in \cite{guillemin2018geometric}.

\appendix

\section{`Coordinate-free' description of the spin-c structure}
In this appendix we give a conceptual construction of the spin-c structure on an (oriented) b-symplectic manifold which illuminates some of its properties.  We begin with some brief informal motivation. 

Our discussion is based on the following simple observation.  If $M$ is a manifold with boundary $\partial M=Z$, then it is well-known that ${}^bTM$ and $TM$ are non-canonically isomorphic.  To obtain an isomorphism, let $C\simeq Z \times [0,1]$ be a collar neighborhood of the boundary $\partial M=Z\times \{0\}$, and let $\pi$ (resp. $x$) denote the projection to $Z$ (resp. $[0,1]$).  Let $e=x\partial_x$ viewed as a section of ${}^bTM|_C$ (it restricts to the canonical section \eqref{eqn:canontriv} along $Z$).  Then ${}^bTM|_C\simeq \pi^\ast TZ \oplus \bR \cdot e$ and $TM|_C\simeq \pi^\ast TZ \oplus \bR \cdot \partial_x$, so one obtains an isomorphism ${}^bTM|_C\simeq TM|_C$ sending $e$ to $\partial_x$.  This isomorphism can be extended to $M$ using the isomorphism provided by the anchor map $a \colon {}^bTM|_{M\backslash C}\rightarrow TM|_{M\backslash C}$, because the latter also maps $e|_{Z\times \{1\}}=1\cdot \partial_x$ to $\partial_x$. (The resulting map is only continuous, but one easily obtains a smooth version using a partition of unity.)

If $Z$ lies in the interior of $M$ this no longer works: in this case the collar $C\simeq Z \times [-1,1]_x$, and to be able to extend continuously using the anchor map at $x=-1$, $e|_{Z\times \{-1 \}}$ would have to go to $-\partial_x$, and the differing signs causes a problem.  Instead of an isomorphism, $TM$, ${}^bTM$ are related (topologically) by a clutching construction involving a reflection.  Let $x$ be a global defining function for $Z$ and let $M_{\ge 0}=x^{-1}[0,\infty)$ (resp. $M_{\le 0}=x^{-1}(-\infty,0]$). Then $M$ is obtained by gluing $M_{\ge 0}$ to $M_{\le 0}$ along $Z=x^{-1}(0)$.  And, up to homotopy, ${}^bTM$ is obtained by gluing $TM_{\ge 0}$ to $TM_{\le 0}$ using the map which sends $(m,v)\in TM_{\ge 0}|_Z$ to $(m,rv)\in TM_{\le 0}|_Z$, where $r$ is fibrewise orthogonal reflection in the subbundle $TZ \subset TM|_Z$ (with respect to any metric on $TM|_Z$).  A simple example is $M=S^1$, $Z=\{\pt\}$, when ${}^bTM$ is the non-orientable $\bR$ line bundle over $S^1$.

Below we will give a more systematic discussion using principal bundles; the discussion above is recovered by passing to associated bundles.  Throughout $Z \subset M$ is a hypersurface, $M$ is connected, and for convenience we equip $TM$, $^bTM$ with positive definite metrics.

\subsection{Surgery of principal bundles.}
Let $G$ be a Lie group, and $P \rightarrow M$ a principal $G$-bundle.  Let $\Ad(P)=P\times_{\Ad} G$ be the adjoint bundle.  Gauge transformations of $P$ are automorphisms of $P$ covering the identity on $M$, and are in one-one correspondence with global sections of the Lie group bundle $\Ad(P)$; we write $\scr{G}(P)$ for the space of global sections of $\Ad(P)$.

Let $q\colon M' \rightarrow M$ be the manifold with boundary obtained by cutting $M$ along $Z$.  The boundary of $M'$ comes equipped with a smooth involution $j \colon \partial M' \rightarrow \partial M'$.  If the normal bundle to $Z$ is orientable, then $\partial M'=Z_1\sqcup Z_2$ consists of two disjoint copies of $Z$, and $j$ swaps the two copies. Note that points in the pullback bundle $q^\ast P$ are labelled by pairs $(m',p)$ where $m' \in M'$ and $p \in P_{q(m')}$. 
\begin{definition}
\label{def:Pg}
Let $g \in \scr{G}(P|_{Z})$ be an involution.  We define a new principal $G$-bundle $P^g$ over $M$ by
\[ P^g=q^\ast P/\sim \]
where the equivalence relation identifies points $(m',p)\sim (j(m'),g(p))$ for $m' \in \partial M'$ (this is an equivalence relation because $j$, $g$ are involutions).  We will refer to this clutching construction as \emph{surgery of $P$ along $Z$ using $g \in \scr{G}(P|_Z)$}.  One easily obtains a smooth version of this construction by choosing a tubular neighborhood $\pi \colon U \rightarrow Z$ and an isomorphism $P|_U\simeq \pi^\ast (P|_Z)$, and then one uses the pullback gauge transformation $g_U=\pi^\ast g$ to define the equivalence relation used for clutching along a collar neighborhood.  
\end{definition}

The construction is functorial: if $P_i \circlearrowleft G_i$ are principal bundles over $M$, $\rho \colon G_1 \rightarrow G_2$ is a group homomorphism intertwining a map $f \colon P_1 \rightarrow P_2$, and $g \in \scr{G}(P_1|_Z)$ then there is an induced $\rho(g) \in \scr{G}(P_2|_Z)$ and a map $f^g\colon P_1^g \rightarrow P_2^{\rho(g)}$. 

\begin{example}
\label{ex:PO}
Let $P_O(TM)$, $P_O(^bTM)$ denote the orthonormal frame bundles of $TM$, ${}^bTM$ respectively.  Note that $\Ad(P_O(TM))=O(TM)$ is the bundle of orthogonal groups of the tangent bundle, and similarly for $\Ad(P_O(^bTM))$. Then $P_O(^bTM)\simeq P_O(TM)^r$ where $r=g\in \scr{G}(P_O(TM|_Z))$ is fibrewise orthogonal reflection in the subbundle $TZ\subset TM|_Z$.
\end{example}

\begin{remark}
\label{rem:gluingotherstructures}
Structures on $P$, if invariant under $g \in \scr{G}(P|_Z)$, can be glued together to give structures on $P^g$.  For smoothness one should thicken $Z$ to a tubular neighborhood $\pi \colon U \rightarrow Z$ as in Definition \ref{def:Pg}, with an isomorphism $P|_U\simeq \pi^\ast (P|_Z)$, and require invariance under the pullback gauge transformation $g_U=\pi^\ast g$. For example, a connection on $P$, if $g_U$-invariant, induces a connection on $P^g$. One can always find $g_U$-invariant connections on $U$ by averaging over the two element subgroup $\{1,g_U\}$ of $\scr{G}(P|_U)$, and using bump functions to extend to $M$.  A corollary is that one can arrange that characteristic forms obtained from the Chern-Weil construction for $P$, $P^g$ are the same.
\end{remark}

\subsection{Graded principal bundles.}
Our use of the expression `graded group' does not seem to be very common, but appears for example in \cite{FreedMoore}.
\begin{definition}
A \emph{graded Lie group} $(G,\partial)$ is a Lie group $G$ together with a smooth homomorphism $\partial \colon G \rightarrow \bZ_2=\{1,-1\}$.  A \emph{graded principal $G$ bundle} $(P,\partial)$ is a principal $G$-bundle together with a smooth map $\partial \colon P \rightarrow \bZ_2$ such that $\partial(pa)=\partial p\cdot \partial a$ for all $a \in G$, $p \in P$.  The \emph{opposite grading} on $P$ is the map $-\partial$.
\end{definition}

\begin{example}
\label{ex:orient}
Let $V$ be a Euclidean vector bundle.  Orientations of $V$ are in one-one correspondence with gradings of the frame bundle $P_O(V)$.  The opposite grading corresponds to the opposite orientation.  In particular $^bTM$ with its symplectic orientation determines a grading $^b\partial_O$ on $P_O(^bTM)$.
\end{example}

If $(P,\partial)$ is graded and $m \in M$ then a gauge transformation $g \in \scr{G}(P_m)$, viewed as an $\Ad_G$-equivariant map $P_m\rightarrow G$, necessarily has constant degree $\partial g \in \bZ_2$ because $\bZ_2$ is abelian and $\partial \colon G \rightarrow \bZ_2$ is a group homomorphism.  Thus if $g \in \scr{G}(P|_Z)$ then $\partial g$ must be constant over each connected component of $Z$.  If $\partial g=1$ then $P^g$ is naturally graded again, the grading being obtained by following the construction in Definition \ref{def:Pg} and using $g$ to patch together the pullback grading on $q^\ast P$. With a global defining function, one can also obtain a grading in the case $\partial g=-1$:
\begin{definition}
\label{def:gradedsurgery}
Let $(P,\partial)$ be a graded principal $G$-bundle over $M$.  Let $Z \subset M$ be a hypersurface admitting a global defining function $x$.  Recall from Definition \ref{def:Pg} that $q \colon M'\rightarrow M$ denotes the quotient map from the manifold with boundary $M'$.  Let $M'=M_1\sqcup M_2$ where $M_1$ (resp. $M_2$) is the union of the components of $M'$ where $x\circ q \ge 0$ (resp. $\le 0$).  Let $Z_i=\partial M_i$. Let $g \in \scr{G}(P|_Z)$ with $\partial g=-1$.  Then we define a grading $\partial^{g,[x]}$ on $P^g$ by following the construction of $P^g$ but using the opposite grading over $M_2$; after reversing the grading over $M_2$, $g \in \scr{G}(P|_Z)$ can be viewed as a grading-preserving morphism $q^\ast P|_{Z_1}\rightarrow q^\ast P|_{Z_2}$ of graded principal bundles, and so the gradings patch together to a grading of $P^g$. The construction depends only on the equivalence class $[x]$ of the global defining function $x$, modulo multiplication by globally defined smooth positive functions; since $M$ is connected there are exactly two equivalence classes $[x]$ and $[-x]$, and $\partial^{g,[-x]}=-\partial^{g,[x]}$.
\end{definition}

\begin{example}
\label{ex:orient2}
Continuing Examples \ref{ex:orient}, \ref{ex:PO}, if $Z$ admits a global defining function $x$ then applying Definition \ref{def:gradedsurgery} to the graded bundle $(P_O(^bTM),{}^b\partial_O)$ and gauge transformation $r$, one obtains a grading ${}^b\partial_O^{r,[x]}=:\partial_O$ on $P_O(^bTM)^r\simeq P_O(TM)$.  Of course we already saw that in the $b$-symplectic case, the choice of an orientation on $M$ determines an equivalence class of global defining functions and conversely; the construction of $\partial_O$ reaffirms this.
\end{example}

\subsection{Spin-c structures.}
A sample reference for this subsection is Lawson-Michelsohn \cite{LawsonMichelsohn}. Let $(V,\pair{-}{-})$ be an $n$-dimensional Euclidean vector space over $\bR$.  Our convention for the real Clifford algebra $Cl(V)\simeq Cl(\bR^n)$ is $v_1v_2+v_2v_1=-2\pair{v_1}{v_2}$.  For convenience throughout we take $n \ge 2$ to be even. 

Recall that $\Pin(V)\simeq \Pin(n)$ is the set of products $a=v_1\cdots v_k \in Cl(V)$ where each $v_i \in V$; such elements are units in $Cl(V)$ with $a^{-1}=(-1)^k v_k\cdots v_1$, hence $\Pin(V)$ is a group, and in fact a graded group if we define $\partial a=(-1)^k$. The subgroup of even elements $\Spin(V)=\partial^{-1}(1)\simeq \Spin(n)$.  The complexification $\bC l(V)=Cl(V)\otimes \bC$ is an algebra over $\bC$. Define $\Pin_c(V)\simeq \Pin_c(n)$ as the set of products $zv_1\cdots v_k \in \bC l(V)$ where $z \in U(1)\subset \bC$ and $v_i \in V$. The grading extends to $\Pin_c(V)$ in the obvious fashion, and $\Spin_c(V)=\partial^{-1}(1)\simeq \Spin_c(n)$ is the subgroup of even elements. There is a well-defined map 
\[ \det \colon \Pin_c(V)\rightarrow U(1), \qquad zv_1\cdots v_k\mapsto z^2.\]

If $w \in V$ has length $1$, then the map
\[ a \in Cl(V) \mapsto \Ad_w(a)=waw^{-1}=-waw \in Cl(V) \]
preserves $V$ and restricts to an orthogonal transformation, namely reflection in the hyperplane perpendicular to $w$ composed with inversion $v\mapsto -v$.  The map $w \mapsto \Ad_w$ extends to a group homomorphism
\[ \Ad \colon \Pin_c(V)\rightarrow O(V),\]
which restricts to $2:1$ coverings $\Pin(V)\rightarrow O(V)$ and $\Spin(V)\rightarrow SO(V)$.  Choosing an orientation on $V$, the \emph{chirality element} is the product
\[ \Gamma=\i^{n/2} e_1\cdots e_n \in \Spin_c(V) \]
where $e_1,...,e_n$ is any oriented orthonormal basis of $V$.  It has the property $\Gamma^2=1$, and $\Ad_\Gamma$ implements the grading $\partial$.  In particular if $v \in V$ then $\Ad_\Gamma(v)=-v$, hence if $0\ne w \in V$, then $\Ad_{\Gamma w}$ is reflection in the hyperplane perpendicular to $w$.

The above discussion extends to Euclidean vector bundles $V$.  For example, $\Pin_c(V)$ denotes the bundle of groups $\Pin_c(V_m)$, $m \in M$.  The following definition is slightly unconventional but equivalent to the more usual ones.
\begin{definition}
A \emph{spin-c structure} on a rank $n$ Euclidean vector bundle $V$ is a grading $\partial_O$ on the orthonormal frame bundle $P_O(V)$ together with a graded principal $\Pin_c(n)$-bundle $(P,\partial)$ and a map of graded principal bundles $f\colon (P,\partial) \rightarrow (P_O(V),\partial_O)$ such that $f(pg)=f(p)\cdot \Ad_g$ for all $g \in \Pin_c(n)$, $p \in P$.  The \emph{determinant line bundle} (or \emph{anti-canonical line bundle}) of the spin-c structure is the associated bundle $P\times_{\det} \bC$.
\end{definition}
Note also that $\Ad(P)=P\times_\Ad \Pin_c(n) \simeq \Pin_c(V)$ identifies with the bundle of $\Pin_c$ groups of the fibres, and the map $\Pin_c(V)\rightarrow O(V)$ induced by $f$ is simply the map $\Ad$ discussed earlier.

We can now give a concise construction of a canonical (up to isomorphism) spin-c structure on an (oriented) b-symplectic manifold.
\begin{theorem}
\label{thm:spincstr}
The tangent bundle $TM$ of an oriented b-symplectic manifold has a canonical spin-c structure.  The determinant line bundle is isomorphic to the determinant line bundle of the spin-c structure on $^bTM$ coming from a compatible almost complex structure.
\end{theorem}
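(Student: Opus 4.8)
The plan is to construct the spin-c structure on $TM$ by applying the graded surgery of Definition \ref{def:gradedsurgery} to the spin-c structure on ${}^bTM$ determined by a compatible almost complex structure, using a lift to $\Pin_c$ of the reflection $r$ of Example \ref{ex:PO}. Let $(P,\partial,f)$ be the spin-c structure on ${}^bTM$ associated to a compatible complex structure $J$ (canonical up to isomorphism, since any two such $J$ are homotopic by Remark \ref{rem:compatiblecomplex}); its spinor module is $\wedge^\bullet({}^bT^{\ast,(0,1)}M)$ and its determinant line bundle is $\scr{L}_b:=\wedge^{n/2}({}^bT^{\ast,(0,1)}M)$. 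Fix the metric $g$ induced by $(\omega,J)$ in a normal form near $Z$ so that the canonical section $e$ of \eqref{eqn:canontriv} satisfies $|e|_g=1$, and let $\Gamma$ be the chirality element. Via the identification $\Ad(P|_Z)\simeq\Pin_c({}^bTM|_Z)$, the element $\Gamma e\in\Pin_c({}^bTM|_Z)$ defines a gauge transformation $g_0:=c(\Gamma e)\in\scr{G}(P|_Z)$.

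First I would record three properties of $g_0$. It is an involution, since $(\Gamma e)^2=-\Gamma^2e^2=1$ (using $\Gamma^2=1$, $e^2=-1$ and $\Gamma e=-e\Gamma$). It has odd degree: $\partial g_0=\partial\Gamma+\partial e=0+1=1$. And $\Ad(g_0)=\Ad_{\Gamma e}$ is fibrewise orthogonal reflection in $e^\perp=TZ\subset{}^bTM|_Z$, which — transported along the canonical inclusion of $TZ$ into both ${}^bTM|_Z$ and $TM|_Z$ — is exactly the reflection $r$ of Example \ref{ex:PO}. Since $\partial g_0=1$, the construction requires the equivalence class $[x]$ of a global defining function, and this is precisely where the orientation of $M$ enters: for an oriented $b$-symplectic manifold, the orientation canonically determines $[x]$ (Section \ref{SS:bsymplectic}). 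Graded surgery (Definition \ref{def:gradedsurgery}) then produces a graded $\Pin_c(n)$-bundle $(P^{g_0},\partial^{g_0,[x]})$. Applying functoriality of surgery to $f$ along $\Ad\colon\Pin_c(n)\to O(n)$ (which is compatible with the gradings), one obtains a map of graded principal bundles $f^{g_0}\colon(P^{g_0},\partial^{g_0,[x]})\to(P_O({}^bTM)^r,{}^b\partial_O^{r,[x]})\simeq(P_O(TM),\partial_O)$, the last identification and the grading $\partial_O$ being those of Examples \ref{ex:PO} and \ref{ex:orient2}; since $\partial_O$ corresponds to the given orientation of $M$ (Example \ref{ex:orient}), the triple $(P^{g_0},\partial^{g_0,[x]},f^{g_0})$ is a spin-c structure on $TM$. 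Its isomorphism class is independent of choices: $J$ is unique up to homotopy, $[x]$ is determined by the orientation, $\Gamma$ and $e$ are canonical, and the remaining metric data vary in a contractible family, so the continuity and functoriality of the surgery construction give a well-defined class. (One also checks this spin-c structure agrees with the one induced by the anchor $a\colon{}^bTM\to TM$ away from $Z$, and with the gluing map $\tau=c(\Gamma e)\circ\rho$ of Definition \ref{def:gluetilS}, so that $\ti{S}$ is the associated spinor bundle.)

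For the determinant line bundle, I would apply functoriality of the ungraded surgery construction to $\det\colon\Pin_c(n)\to U(1)$: the determinant line bundle of $(P^{g_0},\partial^{g_0,[x]},f^{g_0})$ is the surgery of $\scr{L}_b$ along $Z$ by the gauge transformation $\det(g_0)$. A direct computation gives $\det(g_0)=\det(\Gamma e)=(\i^{n/2})^2=(-1)^{n/2}$, a constant. But surgery of a line bundle along a separating hypersurface by a constant $\lambda\in U(1)$ always yields an isomorphic line bundle: the map that is the identity over $M_{x\ge0}$ and multiplication by $\lambda$ over $M_{x\le0}$ is a well-defined isomorphism, precisely because $Z$ separates $M$ (Section \ref{SS:bsymplectic}). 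Hence the determinant line bundle of the spin-c structure on $TM$ is isomorphic to $\scr{L}_b$, as claimed.

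The step I expect to require the most care is the bookkeeping for the graded surgery: matching the odd-degree reflection $r$ with the reversal of the grading over $M_{x\le0}$ in Definition \ref{def:gradedsurgery}, and verifying that the resulting grading on $P_O(TM)$ is exactly the orientation grading $\partial_O$ of Example \ref{ex:orient2} — i.e. that the two constructions (on frame bundles and on $\Pin_c$-bundles) are compatible under $\Ad$. By contrast, the determinant line bundle claim reduces to the short observation that surgery by the scalar $(-1)^{n/2}$ is invisible because $Z$ is separating.
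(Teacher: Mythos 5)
Your proposal is correct and follows essentially the same approach as the paper: you apply the graded surgery of Definition \ref{def:gradedsurgery} to the canonical spin-c structure on ${}^bTM$ using the lift $\Gamma e$ of the reflection $r$, with the equivalence class $[x]$ supplied by the orientation, and use functoriality under $\Ad$ to land on $(P_O(TM),\partial_O)$. The only (minor) divergence is in justifying the determinant-line-bundle claim: the paper notes that the constant $\i^n=\pm 1$ is homotopic to $1$ in $U(1)$ so the surgery leaves the topological type unchanged, whereas you give the slightly more hands-on (and equally valid) argument that constant clutching along a separating hypersurface is trivialized by the map which is the identity on $M_{x\ge 0}$ and multiplication by the constant on $M_{x\le 0}$.
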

\begin{proof}
The orientations on $TM$, $^bTM$ determine an equivalence class of global defining functions $[x]$.  Let $r \in \Gamma(O(^bTM|_Z))=\scr{G}(P_O(^bTM|_Z))$ be orthogonal reflection in the subbundle $TZ\subset {}^bTM|_Z$.  Recall from Example \ref{ex:orient2} that applying the construction of Definition \ref{def:gradedsurgery} to $(P_O(^bTM),{}^b\partial_O)$ yields a graded bundle $(P_O(^bTM)^r,{}^b\partial^{r,[x]}_O)\simeq (P_O(TM),\partial_O)$ where $\partial_O$ is the grading coming from the orientation on $TM$.

The symplectic vector bundle $^bTM$ has a canonical spin-c structure $(^bP,{}^b\partial)\rightarrow (P_O(^bTM),{}^b\partial_O)$ coming from a compatible almost complex structure.  Recall the canonical non-vanishing section $e \in C^\infty(Z,{}^bTM|_Z)$ (see equation \eqref{eqn:canontriv}). We may assume the metric on $^bTM$ is such that $|e|=1$.  The orthogonal reflection $r$ has a canonical lift $\Gamma e \in \Gamma(\Pin_c(^bTM|_Z))=\scr{G}(^bP|_Z)$ (i.e. $\Ad_{\Gamma e}=r$), where the symplectic (or complex) orientation on $^bTM$ is used to define the chirality element $\Gamma$ for $\bC l(^bTM)$.  Note also that $(\Gamma e)^2=-\Gamma^2 e^2=1$ is an involution.  Applying the construction of Definition \ref{def:gradedsurgery} to $g=\Gamma e$ and the class $[x]$, we obtain a principal $\Pin_c(n)$-bundle with grading $(P,\partial):=(^bP^g,{}^b\partial^{g,[x]})$.  By functoriality of surgery, there is a map of graded bundles 
\[ (P,\partial)\rightarrow (P_O(^bTM)^r,{}^b\partial^{r,[x]}_O)\simeq (P_O(TM),\partial_O).\] 
Hence we obtain a spin-c structure for $TM$.  

Since $\det(\Gamma e)=\i^n$ ($=\pm 1$), the induced surgery on the associated line bundle $^bP\times_{\det} \bC$ is the constant automorphism given by multiplication by $\i^n$ along $Z$, but this does not change the topological type of a complex line bundle (it is homotopic to the constant map $1$ in $U(1)$).
\end{proof}
\begin{remark}
The second Stieffel-Whitney classes $w_2(^bTM)=w_2(TM)$ in $H^2(M,\bZ_2)$ (and this holds more generally, with $M$ not required to be even-dimensional or oriented).  One can see this, for example, by constructing a C{\v e}ch cocycle, using the fact that $w_2(V)$ is the obstruction to lifting transition funtions $t_{ij} \colon U_{ij}\rightarrow O(n)$ for $V$ to $\Pin(n)$.  Consequently, if $TM$, $^bTM$ are both oriented, then $TM$ admits a Spin(-c) structure if and only if $^bTM$ admits a Spin(-c) structure.
\end{remark}
\begin{remark}
One feature of the above construction is that it can be applied recursively in the more general situation where $Z$ is allowed to have normal crossing singularities.  In this case $Z=\cup Z_j$ is a union of smooth closed connected hypersurfaces, and the $Z_i$ are allowed to intersect, but at any intersection point $m$, the tangent spaces $T_mZ_i$ look like a subset of the coordinate hyperplanes in $\bR^n$ (see \cite{gualtieri2017tropical} for details).  Assume $^bTM$ is equipped with a metric such that the canonical sections $e_j$ of $^bTM|_{Z_j}$ are orthonormal at the intersection points. Up to homotopy, the frame bundle $P_O$ can be obtained from $^bP_O$ by successive surgeries along $Z_1,Z_2,...$.  Assuming each $Z_j$ admits a global defining function, then one obtains a grading (or orientation, see Example \ref{ex:orient}) on each of the intermediate stages $^bP_O^{r_1}$, $(^bP_O^{r_1})^{r_2}$,...,$P_O$ using Definition \ref{def:gradedsurgery}.  If $^bP$ is the spin-c structure coming from the complex structure on $^bTM$, then $^bP^{g_1}$, $(^bP^{g_1})^{g_2}$,... are spin-c structures for $^bP_O^{r_1}$, $(^bP_O^{r_1})^{r_2}$,..., where $g_j=\Gamma_j e_j$ and $\Gamma_j$ is the chirality element defined using the orientation at the $(j-1)^{st}$ stage (note that, for example, $e_2$ canonically determines an element of $\scr{G}(^bP^{g_1}|_{Z_2})$ because $\Ad_{\Gamma_1 e_1}(e_2)=e_2$ is fixed by the gauge transformation along $Z_1 \cap Z_2$).  We will discuss this further elsewhere. 
\end{remark}

Up to equivalence, the algebra $\bC l(n)$ has a unique irreducible representation $\Delta \simeq \bC^{2^{n/2}}$,
\[ \delta \colon \bC l(n)\xrightarrow{\sim} \End(\Delta).\]
Moreover $\delta$ maps $\Pin_c(n)$ into a subgroup of the unitary group $U(\Delta)$.  The element $\delta(\Gamma)$ squares to $1$ so determines a grading on $\Delta$.  The spinor module associated to a spin-c structure $(P,\partial)$ for $V$ is the associated bundle
\[ S=P\times_{\delta} \Delta.\]
$S$ inherits a $\bZ_2$-grading from those on $P$ and $\Delta$.  $S$ also automatically inherits a Hermitian structure and $\bC l(V)=P\times_\Ad \bC l(n)$-module structure $c \colon \bC l(V)\xrightarrow{\sim} \End(S)$ from the inner product and $\bC l(n)$-module structure on $\Delta$.  Applying this construction to the spin-c structure in Theorem \ref{thm:spincstr} recovers the spinor module described in Section \ref{SS:spinor} (up to isomorphism).


\bibliographystyle{amsplain}
\providecommand{\bysame}{\leavevmode\hbox to3em{\hrulefill}\thinspace}
\providecommand{\MR}{\relax\ifhmode\unskip\space\fi MR }
\providecommand{\MRhref}[2]{%
  \href{http://www.ams.org/mathscinet-getitem?mr=#1}{#2}
}
\providecommand{\href}[2]{#2}

\end{document}